\newtheorem{theorem}{Theorem}[section]
\newtheorem{lemma}[theorem]{Lemma}
\newtheorem{proposition}{Proposition}
\theoremstyle{definition}
\newtheorem{hypothesis}[theorem]{Hypothesis}
\newtheorem{remark}{Remark}
\def \Hm {\mathbb{H}}
\def \Imm {\mathbb{I}}
\def \Mm {\mathbb{M}}
\def \Rm {\mathbb{R}}
\def \Sm {\mathbb{S}}
\def \Vm {\mathbb{V}}
\def\C{\mathcal{C}}
\def\H{\mathcal{H}}
\def\L{\mathcal{L}}
\def\O{\mathcal{O}}
\newcommand{\dev}{\text{dev}}
\newcommand{\wtM}{ {\widetilde{M}}}
\newcommand{\dH}{ dH }
\newcommand{\where}{\quad\text{ where }}
\newcommand{\qandq}{\quad\text{ and }\quad}
\newcommand{\bfe}{ {\bf e}}
\newcommand{\bfh}{ {\bf h}}
\newcommand{\bfv}{ {\bf v}}
\newcommand{\bfI}{ {\bf I}}
\newcommand{\bfP}{ {\bf P}}
\newcommand{\tr}{ {\text{tr }}}
\def \hxi { \hat{\xi}}
\def \supp { {\mbox{supp}}}
\def \sp {\text{sp}}
\def \i {\boldsymbol\iota}
\title[Linearized internal functionals for anisotropic conductivities]
      {Linearized internal functionals for anisotropic conductivities}
      \author[Guillaume Bal Chenxi Guo and Fran\c{c}ois Monard]{}
\subjclass{Primary: 35R30, 35S05; Secondary: 35J47.}
 \keywords{Inverse problems, anisotropic conductivity, multi-wave imaging methods, elliptic systems, pseudo-differential operators.}
 \email{gb2030@columbia.edu}
 \email{cg2597@columbia.edu}
 \email{fmonard@uw.edu}
\thanks{This work was supported in part by NSF grant DMS-1108608 and AFOSR Grant NSSEFF- FA9550-10-1-0194. FM is partially supported by NSF grant DMS-1025372.}
\begin{document}
\maketitle

\centerline{\scshape Guillaume Bal and Chenxi Guo}
\medskip
{\footnotesize
 \centerline{Department of Applied Physics and Applied Mathematics}
   \centerline{Columbia University}
   \centerline{New York, NY 10027, USA}
} 

\medskip

\centerline{\scshape Fran\c{c}ois Monard}
\medskip
{\footnotesize
 \centerline{ Department of Mathematics}
   \centerline{University of Washington}
   \centerline{Seattle WA, 98195, USA}
}

\bigskip

 \centerline{(Communicated by the associate editor name)}

\begin{abstract}
  This paper concerns the reconstruction of an anisotropic conductivity tensor in an elliptic second-order equation from knowledge of the so-called power density functionals. This problem finds applications in several coupled-physics medical imaging modalities such as ultrasound modulated electrical impedance tomography and impedance-acoustic tomography.
  
  We consider the linearization of the nonlinear hybrid inverse problem. We find sufficient conditions for the linearized problem, a system of partial differential equations, to be elliptic and for the system to be injective. Such conditions are found to hold for a lesser number of measurements than those required in recently established explicit reconstruction procedures for the nonlinear problem.
\end{abstract}

\section{Introduction}

In the context of hybrid medical imaging methods, a physical coupling between a high-contrast modality (e.g. Electrical Impedance Tomography, Optical Tomography) and a high-resolution modality (e.g. acoustic waves, Magnetic Resonance Imaging) is used in order to benefit from the advantages of both. Without this coupling, the high-contrast modality, usually modeled by an inverse problem involving the reconstruction of the constitutive parameter of an elliptic PDE from knowledge of boundary functionals, results in a mathematically severely ill-posed problem and suffers from poor resolution. The analysis of this coupling usually involves a two-step inversion procedure where the high-resolution modality provides internal functionals, from which we reconstruct the parameters of the elliptic equation, thus leading to improved resolution \cite{AS-IP-12,B-IO-12,Kuchment2012,Monard2012b,SU-IO-12}.

A problem that has received a lot of attention recently concerns the reconstruction of the conductivity tensor $\gamma$ in the elliptic equation
\begin{align}
    \nabla\cdot(\gamma\nabla u) = 0 \quad (X), \qquad u|_{\partial X} = g,
    \label{eq:conductivity}
\end{align}
from knowledge of internal power density measurements of the form $\nabla u\cdot\gamma\nabla v$, where $u$ and $v$ both solve \eqref{eq:conductivity} with possibly different boundary conditions. This problem is motivated by a coupling between electrical impedance imaging and ultrasound imaging and also finds applications in thermo-acoustic imaging.

Explicit reconstruction procedures for the above non-linear problem have been established in \cite{Capdeboscq2009,Bal2011a,Monard2011a,Monard2011,Monard2012b}, successively in the 2D, 3D, and $n$D isotropic case, and then in the 2D and $n$D anisotropic case. In these articles, the number of functionals may be quite large. The analyses in \cite{Monard2012b} were recently summarized and pushed further in \cite{Monard2012a}. If one decomposes $\gamma$ into the product of a scalar function $\tau = (\det\gamma)^\frac{1}{n}$ and a scaled anisotropic structure $\tilde\gamma$ such that $\det\tilde\gamma=1$, the latter reference establishes explicit reconstruction formulas for both quantities with Lipschitz stability for $\tau$ in $W^{1,\infty}$, and involving the loss of one derivative for $\tilde\gamma$.

In the isotropic case, several works study the above problem in the presence of a lesser number of functionals. The case of one functional is addressed in \cite{B-UMEIT-12}, whereas numerical simulations show good results with two functionals in dimension $n=2$ \cite{ABCTF-SIAP-08,GS-SIAP-09}. Theoretical and numerical analyses of the linearized inverse problem are considered in \cite{Kuchment2011a,Kuchment2011}. The stabilizing nature of  a class of internal functionals containing the power densities is demonstrated in \cite{Kuchment2011} using a micro-local analysis of the linearized inverse problem. The above inverse problem is recast as a system of nonlinear partial differential equations in \cite{B-Irvine-12} and its linearization analyzed by means of theories of elliptic systems of equations. It is shown in the latter reference that $n+1$ functionals, where $n$ is spatial dimension, is sufficient to reconstruct a scalar coefficient $\gamma$ with elliptic regularity, i.e., with no loss of derivatives, from power density measurements. This was confirmed by two-dimensional simulations in \cite{BNSS-JIIP-13}. All known explicit reconstruction procedures require knowledge of a larger number of internal functionals.



In the present work, we study the linearized version of this inverse problem in the anisotropic case, i.e. we write an expansion of the form $\gamma^\varepsilon = \gamma_0 + \varepsilon\gamma$ with $\gamma_0$ known and $\varepsilon\ll1$, and study the reconstructibility of $\gamma$ from linearized power densities (LPD). We first proceed by supporting the perturbation $\gamma$ away from the boundary $\partial X$ and analyze microlocally the symbol of the linearized functionals, and show that, as in \cite{Kuchment2011}, a large enough number of functionals allows us to construct a left-parametrix and set up a Fredholm inversion. The main difference between the isotropic and anisotropic settings is that the anisotropic part of the conductivity is reconstructed with a loss of one derivative. Such a loss of a derivative is optimal since our estimates are elliptic in nature. It is reminiscent of results obtained for a similar problem in \cite{BU-CPAM-12}.


Secondly, we show how the explicit inversion approach presented in \cite{Monard2012b,Monard2012a} carries through linearization, thus allowing for reconstruction of fully anisotropic tensors supported up to the boundary of $X$. In this case, we derive reconstruction formulas that require a smaller number of power densities than in the non-linear case, giving possible room for improvement in the non-linear inversion algorithms.

For additional information on hybrid inverse problems in other areas of (mostly medical) imaging, we refer the reader to, e.g., \cite{AS-IP-12,B-IO-12,S-SP-2011,SU-IO-12}.

\section{Statement of the main results}

Consider the conductivity equation \eqref{eq:conductivity}, where $X\subset\Rm^n$ is open, bounded and connected with $n\ge 2$, and where $\gamma^\varepsilon$ is a uniformly elliptic conductivity tensor over $X$.

We set boundary conditions $(g_1,\dots,g_m)$ and call $u_i^\varepsilon$ the unique solution to \eqref{eq:conductivity} with $u_i^\varepsilon|_{\partial X} = g_i$, $1\le i\le m$ and conductivity $\gamma^\varepsilon$. We consider the measurement functionals
\begin{align}
    H^\varepsilon_{ij}: \gamma^\varepsilon \mapsto H^\varepsilon_{ij}(\gamma^\varepsilon) = \nabla u^\varepsilon_i \cdot \gamma^\varepsilon\nabla u^\varepsilon_j (x), \quad 1\le i,j\le m, \quad x\in X.
    \label{eq:meas}
\end{align}

Considering an expansion of the form $\gamma^\varepsilon = \gamma_0 + \varepsilon \gamma$, where the background conductivity $\gamma_0$ is known, uniformly elliptic and $\varepsilon$ so small that the total $\gamma^\varepsilon$ remains uniformly elliptic, we first look for the Fr\'echet derivative of \eqref{eq:meas} with respect to $\gamma$ at $\gamma_0$. Expanding the solutions $u_i^\varepsilon$ accordingly as
\begin{align*}
    u_i^\varepsilon &= u_i + \varepsilon v_i + \O(\varepsilon^2),\quad 1\le i\le m,
\end{align*}
the PDE \eqref{eq:conductivity} at orders $\O(\varepsilon^0)$ and $\O(\varepsilon^1)$ gives rise to two relations
\begin{align}
    -\nabla\cdot(\gamma_0\nabla u_i) &= 0 \quad (X), \qquad u_i|_{\partial X} = g_i, \label{eq:conductivity0} \\
    -\nabla\cdot(\gamma_0\nabla v_i) &= \nabla\cdot(\gamma\nabla u_i) \quad (X), \qquad v_i|_{\partial X} = 0. \label{eq:conductivity1}
\end{align}
The measurements then look like
\begin{align}
    H^\varepsilon_{ij} = \nabla u_i\cdot\gamma_0 \nabla u_j + \varepsilon \left( \nabla u_i\cdot\gamma\nabla u_j +  \nabla u_i\cdot\gamma_0\nabla v_j + \nabla u_j\cdot\gamma_0\nabla v_i \right) + \O(\varepsilon^2).
\end{align}
Therefore, the component $\dH_{ij}$ of the Fr\'echet derivative of $H$ at $\gamma_0$ is
\begin{align}
    \dH_{ij} (\gamma) = \nabla u_i\cdot\gamma\nabla u_j +  \nabla u_i\cdot\gamma_0\nabla v_j + \nabla u_j\cdot\gamma_0\nabla v_i, \quad x\in X,
    \label{eq:LPD}
\end{align}
where the $v_i$'s are linear functions in $\gamma$ according to \eqref{eq:conductivity1}.

In both subsequent approaches, reconstruction formulas are established under the following two assumptions about the behavior of solutions related to the conductivity of reference $\gamma_0$. The first hypothesis deals with having a basis of gradients of solutions of \eqref{eq:conductivity0} over a certain subset $\Omega\subseteq X$.

\begin{hypothesis}\label{hyp:det} For an open set $\Omega\subseteq X$, there exist $(g_1,\dots,g_n)\in H^\frac{1}{2}(\partial X)^n$ such that the corresponding solutions $(u_1,\dots,u_n)$ of \eqref{eq:conductivity0} with boundary condition $u_i|_{\partial X} = g_i$ ($1\le i\le n$) satisfy
    \begin{align*}
	\inf_{x\in \Omega} \det (\nabla u_1,\dots,\nabla u_n) \ge c_0 >0.
    \end{align*}
\end{hypothesis}

Once Hypothesis \ref{hyp:det} is satisfied, any additional solution $u_{n+1}$ of \eqref{eq:conductivity0} gives rise to a $n\times n$ matrix
\begin{align}
    Z = [Z_1|\dots|Z_n], \where \quad Z_i:= \nabla \frac{\det (\nabla u_1,\dots,\overbrace{\nabla u_{n+1}}^i, \dots, \nabla u_n)}{\det (\nabla u_1,\dots,\nabla u_n)}.
    \label{eq:Zmat}
\end{align}
As seen in \cite{Monard2012b,Monard2012a}, such matrices can be computed from the power densities $\{\nabla u_i\cdot\gamma_0 \nabla u_j\}_{i,j=1}^{n+1}$ and help impose orthogonality conditions on the anisotropic part of $\gamma_0$. Once enough such conditions are obtained by considering enough additional solutions, then the anisotropy is reconstructed explicitly via a generalization of the usual cross-product defined in three dimensions. In the linearized setting, we find that {\em one} additional solution such that $Z$ has full rank is enough to reconstruct the linear perturbation $\gamma$. We thus formulate our second crucial assumption here:
\begin{hypothesis}\label{hyp:Z}
    Assume that Hypothesis \ref{hyp:det} holds over some fixed $\Omega\subseteq X$. There exists $g_{n+1}\in H^{\frac{1}{2}}(\partial X)$ such that the solution $u_{n+1}$ of \eqref{eq:conductivity0} with boundary condition $u_{n+1}|_{\partial X} = g_{n+1}$ has a full-rank matrix $Z$ (as defined in \eqref{eq:Zmat}) over $\Omega$.
\end{hypothesis}

\begin{remark}[Case $\gamma_0$ constant]\label{rem:const} In the case where $\gamma_0$ is constant, then it is straightforward to see that $g_i = x_i|_{\partial X}$ ($1\le i\le n$) fulfill Hypothesis \ref{hyp:det} over $X$. Moreover, if $Q = \{q_{ij}\}_{i,j=1}^n$ denotes an invertible constant matrix such that $Q:\gamma_0 = 0$, then the boundary condition $g_{n+1} := \frac{1}{2} q_{ij} x_i x_j |_{\partial X}$ fulfills Hypothesis \ref{hyp:Z}, since we have $Q = Z$.
\end{remark}

Throughout the paper, we use for (real-valued) square matrices $A$ and $B$ the contraction notation $A:B={\rm tr}\, AB^T=\sum_{i,j} A_{ij}B_{ij}$, with $B^T$ the transpose matrix of $A$.

\begin{remark}
    In the treatment of the non-linear case \cite{Bal2011a,Monard2011a,Monard2012b,Monard2012a}, it has been pointed out that Hypothesis \ref{hyp:det} may not be systematically satisfied globally in dimension $n\ge 3$. A more general hypothesis to consider would come from picking a larger family (of cardinality $>n$) of solutions whose gradients have maximal rank throughout $X$. While this additional technical point would not alter qualitatively the present reconstruction algorithms, it would add complexity in notation which the authors decided to avoid; see also \cite{BU-CPAM-12}.
\end{remark}

\subsection{Past work and heuristics for the linearization} \label{sec:heur}
In the reconstruction approach developped in \cite{Monard2011,Monard2012b,Monard2012a} for the non-linear problem, it was shown that not every part of the conductivity was reconstructed with the same stability. Namely, consider the decomposition of the tensor $\gamma'$ into the product of a scalar function $\tau = (\det\gamma')^\frac{1}{n}$ and a scaled anisotropic structure $\tilde\gamma'$ with $\det\tilde\gamma' = 1$. The following results were then established. Starting from $n$ solutions whose gradients form a basis of $\Rm^n$ over a subset $\Omega\subset X$, it was shown that under knowledge of a $W^{1,\infty}(X)$ anisotropic structure $\tilde\gamma'$, the scalar function $\log\det\gamma'$ was uniquely and Lipschitz-stably reconstructible in $W^{1,\infty}(\Omega)$ from $W^{1,\infty}$ power densities. Additionally, if one added a finite number of solutions $u_{n+1},\dots,u_{n+l}$ such that the family of matrices $Z_{(1)}, \dots, Z_{(l)}$ defined as in \eqref{eq:Zmat} imposed enough orthogonality constraints on $\tilde\gamma'$, then the latter was explicitely reconstructible over $\Omega$ from the mutual power densities of $(u_1,\dots,u_{n+l})$. The latter reconstruction was stable in $L^\infty$ for power densities in $W^{1,\infty}$ norm, thus it involved the loss of one derivative.

Passing to the linearized setting now (recall $\gamma^\varepsilon = \gamma_0 + \varepsilon\gamma$), and anticipating that one scalar quantity may be more stably reconstructible than the others, this quantity should be the linearized version of $\log\det\gamma^\varepsilon$. Standard calculations yield
\begin{align*}
    \log\det (\gamma_0 + \varepsilon\gamma) = \log\det \gamma_0 + \log\det (\Imm_n + \varepsilon \gamma_0^{-1}\gamma) = \log\det \gamma_0 + \varepsilon\tr (\gamma_0^{-1}\gamma ) + \O(\varepsilon^2),
\end{align*}
and thus the quantity that should be stably reconstructible is $\tr (\gamma_0^{-1} \gamma)$. The linearization of the product decomposition $(\tau,\tilde\gamma')$ above is now a spherical-deviatoric one of the form
\begin{align}
    \gamma = \frac{1}{n} \tr(\gamma_0^{-1}\gamma) \gamma_0 + \gamma_d, \quad \gamma_d := \gamma_0 (\gamma_0^{-1}\gamma)^\dev,
    \label{eq:gammadecomp}
\end{align}
where $^\dev$ is the linear projection onto the hyperplane of traceless matrices $A^\dev := A-\frac{\tr A}{n} \Imm_n$.

\subsection{Microlocal inversion}\label{ssec:statmicroloc}

The above inverse  problem in \eqref{eq:conductivity1}-\eqref{eq:LPD} may be seen as a system of partial differential equations for $(\gamma,\{v_j\})$. This is the point of view considered in \cite{B-Irvine-12}. However, $\{v_j\}$ may be calculated from \eqref{eq:conductivity1} and the expression plugged back into \eqref{eq:LPD}. This allows us to recast $dH$ as a linear operator for $\gamma$, which is smaller than the original linear system for $(\gamma,\{v_j\})$, but which is no longer differential and rather pseudo-differential. The objective in this section is to show, following earlier work in the isotropic case in \cite{Kuchment2011}, that such an operator is elliptic under appropriate conditions. 

We first fix $\Omega'\subset\subset X$ and assume that $\supp\gamma\subset\Omega'$, so that integrals of the form $\int_{\Rm^n} e^{\i x\cdot\xi} p(x,\xi):\hat\gamma(\xi)d\xi$ are well-defined, with $p(x,\xi)$ a matrix-valued symbol whose entries are polynomials in $\xi$ (see \cite[p.267]{Folland1995}) and where the hat denotes the Fourier Transform $\hat\gamma(\xi) = \int_{\Rm^n} e^{-\i x\cdot\xi} \gamma(x) \ dx$. We also assume that $\gamma_0\in\C^\infty(\Omega')$ and can be extended smoothly by $\gamma_0 = \Imm_n$ outside $\Omega'$. As pointed out in \cite{Kuchment2011}, in order to treat this problem microlocally, one must introduce cutoff versions of the $\dH_{ij}$ operators, which in turn extend to pseudo-differential operators ($\Psi$DO) on $\Rm^n$. Namely, if $\Omega''$ is a domain satisfying $\Omega'\subset\subset\Omega''\subset\subset X$ and $\chi_1$ is a smooth function supported in $X$ which is identically equal to $1$ on a neighborhood of $\overline{\Omega''}$, the operator $\gamma\mapsto \chi_1 \dH_{ij}(\chi_1 \gamma)$ can be made a $\Psi$DO upon considering $L_0 = -\nabla\cdot(\gamma_0 \nabla )$ as a second-order operator on $\Rm^n$ and using standard pseudo-differential parametrices to invert it \cite{GS-CUP-94}. We will therefore not distinguish the operators $\dH_{ij}$ from their pseudo-differential counterparts. The task of this section is then to determine conditions under which a given collection of such functions becomes an elliptic operator of $\gamma$ over $\Omega'$.

Using relations \eqref{eq:conductivity1} and \eqref{eq:LPD}, we aim at writing the operator $dH_{ij}$ in the following form
\begin{align}
    \dH_{ij}(x) &= (2\pi)^{-n} \int_{\Rm^n} \int_{\Rm^n} e^{\i\xi\cdot(x-y)} M_{ij}(x,\xi):\gamma(y)\ d\xi\ dy,
    \label{eq:dHijpsido}
\end{align}
with symbol $M_{ij}(x,\xi)$ (pseudo-differential terminology is recalled in Sec. \ref{ssec:prelim}). We first compute the main terms in the symbol expansion of $\dH_{ij}$ (call this expansion $M_{ij} = M_{ij}|_0 + M_{ij}|_{-1} + \O (|\xi|^{-2})$ with $M_{ij}|_p$ homogeneous of degree $p$ in $\xi$). From these expressions, we then directly deduce microlocal properties on the corresponding operators.

The first lemma shows that the principal symbols $M_{ij}|_0$ can never fully invert for $\gamma$, no matter how many solutions $u_i$ we pick. When Hypothesis \ref{hyp:det} is satisfied, then the characteristic directions of the principal symbols $\{M_{ij}(x,\xi)\}_{1\le i,j\le n}$ reduce to a $n-1$-dimensional subspace of $S_n(\Rm)$. Here and below, we recall that the colon ``$:$'' denotes the inner product $A:B = \tr (AB^T)$ for $(A,B)\in S_n(\Rm)$ and $\odot$ denotes the symmetric outer product $U\odot V = \frac{1}{2} (U\otimes V + V\otimes U)$ for $U,V\in \Rm^n$.
\begin{lemma} \label{lem:xieta}
    \begin{itemize}
	\item[(i)] For any $i,j$ and $x\in X$, the symbol $M_{ij}|_0$ satisfies
	    \begin{align}
		M_{ij}|_0 : (\gamma_0\xi\odot\eta) = 0, \quad \text{for all } \,\eta\,\in \Sm^{n-1} \text{ satisfying } \quad \eta\cdot\xi = 0.
		\label{eq:xieta}
	    \end{align}
	\item[(ii)] Suppose that Hypothesis \ref{hyp:det} holds over some $\Omega\subseteq X$. Then for any $x\in \Omega$, if $P\in S_n(\Rm)$ is such that
	    \begin{align}
		M_{ij}|_0 : P=0, \quad 1\le i\le j\le n,
		\label{eq:Portho}
	    \end{align}
	    then $P$ is of the form $P = \gamma_0\xi\odot\eta$ for some vector $\eta$ satisfying $\eta\cdot\xi = 0$.
    \end{itemize}
\end{lemma}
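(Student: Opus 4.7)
The strategy is to first compute the principal symbol $M_{ij}|_0$ explicitly by treating \eqref{eq:conductivity1}-\eqref{eq:LPD} as a composition of pseudo-differential operators acting on $\gamma$, and then to read off parts (i) and (ii) by elementary linear algebra on $S_n(\Rm)$.

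For the symbol derivation, the term $\nabla u_i\cdot\gamma\nabla u_j$ is a zero-order multiplication whose symbol contribution is $\nabla u_i\odot\nabla u_j$. For the remaining terms, I would use that the operator $\gamma\mapsto\nabla\cdot(\gamma\nabla u_j)$ has principal symbol $\i\,\xi\otimes\nabla u_j$ (order $1$ in $\xi$), and that the parametrix of $L_0=-\nabla\cdot(\gamma_0\nabla)$ has principal symbol $(\gamma_0\xi\cdot\xi)^{-1}$. Composing these and then applying $\nabla u_i\cdot\gamma_0\nabla$ to the resulting $v_j$ produces an order-$0$ contribution $-(\gamma_0\xi\cdot\nabla u_i)(\xi\otimes\nabla u_j)/(\gamma_0\xi\cdot\xi)$, with a symmetric counterpart from $v_i$. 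Since $\gamma$ is symmetric, I can symmetrize freely in the second factor, arriving at
\[
M_{ij}|_0(x,\xi) = \nabla u_i\odot\nabla u_j - \frac{(\gamma_0\xi\cdot\nabla u_i)(\xi\odot\nabla u_j)+(\gamma_0\xi\cdot\nabla u_j)(\xi\odot\nabla u_i)}{\gamma_0\xi\cdot\xi}.
\]

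Part (i) then follows by a direct contraction, using the identity $(A\odot B):(C\odot D)=\tfrac12[(A\cdot C)(B\cdot D)+(A\cdot D)(B\cdot C)]$: with $P=\gamma_0\xi\odot\eta$ and $\xi\cdot\eta=0$, the cross terms containing $\xi\cdot\eta$ vanish, and the three remaining contributions cancel in pairs. For part (ii), I would introduce the oblique projections $\tilde X_i:=\nabla u_i-\tfrac{\gamma_0\xi\cdot\nabla u_i}{\gamma_0\xi\cdot\xi}\xi$ of $\nabla u_i$ onto $(\gamma_0\xi)^\perp$ along $\xi$, which (after a short manipulation) recast the symbol in the cleaner form
\[
M_{ij}|_0 = \tilde X_i\odot\tilde X_j - \frac{(\gamma_0\xi\cdot\nabla u_i)(\gamma_0\xi\cdot\nabla u_j)}{(\gamma_0\xi\cdot\xi)^2}\,\xi\odot\xi.
\]
Under Hypothesis \ref{hyp:det}, $\{\nabla u_i\}$ is a basis of $\Rm^n$ and, since $\gamma_0\xi\cdot\xi>0$, the projections $\{\tilde X_i\}$ span the $(n-1)$-dimensional hyperplane $(\gamma_0\xi)^\perp$; hence $\{\tilde X_i\odot\tilde X_j\}$ spans the subspace $S:=\mathrm{Sym}^2((\gamma_0\xi)^\perp)\subset S_n(\Rm)$, of dimension $n(n-1)/2$. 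Picking the unique $\mu\in\Rm^n$ with $\sum_i\mu_i\nabla u_i=\xi$ gives $\sum_{i,j}\mu_i\mu_j\,M_{ij}|_0=-\xi\odot\xi$, so $\xi\odot\xi$ also lies in the span of $\{M_{ij}|_0\}$. Since every $P\in S$ annihilates $\gamma_0\xi$ while $(\xi\odot\xi)\gamma_0\xi=(\gamma_0\xi\cdot\xi)\xi\neq 0$, the sum $S+\Rm\,\xi\odot\xi$ is direct, of dimension $\tfrac{n(n-1)}{2}+1=\tfrac{n^2-n+2}{2}$. Taking the $:$-orthogonal complement in $S_n(\Rm)$ then forces the kernel of $P\mapsto (M_{ij}|_0:P)_{1\le i\le j\le n}$ to have dimension exactly $n-1$. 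By part (i), this kernel contains the $(n-1)$-dimensional image of the injective map $\eta\mapsto\gamma_0\xi\odot\eta$ restricted to $\xi^\perp$, so the two coincide.

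The main obstacle will be the principal-symbol derivation itself: keeping track of signs, powers of $\i$, and the bookkeeping when composing through the vector-valued intermediate $\nabla v_j$ is where errors most naturally creep in. Once the compact formula for $M_{ij}|_0$ is in hand, both (i) and (ii) reduce to linear-algebra exercises driven by the oblique projection decomposition $\nabla u_i=\tilde X_i+\tfrac{\gamma_0\xi\cdot\nabla u_i}{\gamma_0\xi\cdot\xi}\,\xi$.
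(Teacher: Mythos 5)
Your symbol computation and your treatment of part (i) coincide with the paper's: the same parametrix composition (two factors of $\i$ producing the minus sign, symmetrization justified by the symmetry of $\gamma$) yields the identical formula for $M_{ij}|_0$, and (i) is the same direct contraction via $2(U\odot V):(X\odot Y)=(U\cdot X)(V\cdot Y)+(U\cdot Y)(V\cdot X)$. For part (ii) you take a genuinely different, though equally elementary, route. The paper conjugates by $A_0=\gamma_0^{1/2}$, splits $S_n(\Rm)$ into the three orthogonal pieces $\Rm\,\hxi_0\otimes\hxi_0$, $\{\hxi_0\}^\perp\odot\{\hxi_0\}^\perp$ and $\hxi_0\odot\{\hxi_0\}^\perp$, writes $A_0^{-1}PA_0^{-1}=P_1+P_2+P_3$, and computes $M_{ij}|_0:P=V_i\odot V_j:(-P_1+P_2)$ with $V_i=A_0\nabla u_i$; since $\{V_i\odot V_j\}_{1\le i\le j\le n}$ is a basis of $S_n(\Rm)$ under Hypothesis \ref{hyp:det}, this forces $P_1=P_2=0$ and leaves exactly the claimed form $P=\gamma_0\xi\odot\eta$. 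You instead rewrite $M_{ij}|_0=\tilde X_i\odot\tilde X_j-a_ia_j\,\xi\odot\xi$ via the oblique projections $\tilde X_i$ onto $(\gamma_0\xi)^\perp$ (a correct identity, since $\odot$ is symmetric), identify the span of the symbols as $\mathrm{Sym}^2\bigl((\gamma_0\xi)^\perp\bigr)\oplus\Rm\,\xi\odot\xi$ of dimension $n(n-1)/2+1$, deduce that the kernel of $P\mapsto(M_{ij}|_0:P)_{i\le j}$ has dimension $n-1$, and conclude by matching it with the space already exhibited in part (i). Both arguments are sound; the paper's is constructive for an arbitrary annihilated $P$ and sets up the $\hxi_0$-adapted decomposition reused in the later parametrix constructions, while yours is shorter because it recycles part (i) and a dimension count. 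Two small points you should spell out to make the count airtight: the injectivity of $\eta\mapsto\gamma_0\xi\odot\eta$ on $\xi^\perp$ (so the space from (i) really has dimension $n-1$), and the verification that $\sum_i\mu_i\tilde X_i=0$ and $\sum_i\mu_i a_i=1$ when $\sum_i\mu_i\nabla u_i=\xi$, which is what gives $\sum_{i,j}\mu_i\mu_j M_{ij}|_0=-\xi\odot\xi$ and hence puts $\xi\odot\xi$ in the span.
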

Since an arbitrary number of zero-th order symbols can never be elliptic with respect to $\gamma$, we then consider the next term in the symbol expansion of $\dH_{ij}$. We must also add one solution $u_{n+1}$ to the initial collection, exhibiting appropriate behavior, i.e. satisfying Hypothesis \ref{hyp:Z}. The collection of functionals we consider below is thus of the form 
\begin{align}
    \dH := \{\dH_{ij}\, |\, 1\le i\le n,\ i\le j\le n+1 \},
    \label{eq:dHcollection}
\end{align}
and emanates from $n+1$ solutions $(u_1,\dots,u_{n+1})$ of \eqref{eq:conductivity0} satisfying Hypotheses \ref{hyp:det} and \ref{hyp:Z}. 

In order to formulate the result, we assume to construct a family of unit vector fields 
\begin{align*}
    \hxi_0(x,\xi) := \widehat{A_0(x)\xi}\, , \, \hxi_1(x,\xi)\,,\, \dots\,,\, \hxi_{n-1}(x,\xi),
\end{align*}
homogeneous of degree zero in $\xi$, smooth in $x$ and everywhere orthonormal. We then define the family of scalar elliptic zeroth-order $\Psi$DO
\begin{align}
    \begin{split}
	T: \gamma &\mapsto T\gamma = \{T_{pq}\gamma\}_{0\le p\le q\le n-1}, \quad \where \\
	T_{pq}\gamma (x) &:= (2\pi)^{-n} \int_{\Rm^n} e^{\i\xi\cdot x} A_0^{-1}\hxi_p \odot \hxi_q A_0^{-1} : \hat\gamma(\xi)\ d\xi, \quad 1\le p\le q\le n,
    \end{split}
    \label{eq:T}
\end{align}
which can be thought of as a microlocal change of basis after which the operator $dH(\gamma)$ becomes both diagonal and elliptic. Indeed, we verify (see section \ref{ssec:proofs}) that for any $k\geq1$ and $\gamma$ sufficiently regular, we have
\begin{equation}\label{eq:changebasis}
   \|\gamma\|_{H^k(\Omega')} \leq C \|T\gamma\|_{H^k(\Omega')} + C_2 \|\gamma\|_{L^2(\Omega')} \leq C_3 \|\gamma\|_{H^k(\Omega')}.
\end{equation}
 The above estimates come from standard result on pseudo-differential operators \cite{GS-CUP-94}. The presence of the constant $C_2$ indicates that $T$ can be inverted microlocally, but may not injective.
 
   Composing the measurements $\dH_{ij}$ with appropriate scalar $\Psi$DO of order 0 and 1, we are then able to recover each component of the operator \eqref{eq:T}. The well-chosen ``parametrices'' are made possible by the fact that the collection of symbols $M_{ij}|_0 + M_{ij}|_{-1}$ becomes elliptic over $\Omega'$ when Hypotheses \ref{hyp:det} and \ref{hyp:Z} are satisfied. Rather than using the full collection of measurements $\dH$ \eqref{eq:dHcollection}, we will consider the smaller collection $\{\dH_{ij}\}_{1\le i,j\le n}$ augmented with the $n$ measurement operators 
\begin{align}
    L_i(\gamma) = \sum_{j=1}^n \mu_j\ \dH_{ij} (\gamma) + \mu\ \dH_{i,n+1}(\gamma), \quad 1\le i\le n,
    \label{eq:Li}
\end{align}
where $(\mu_1,\dots,\mu_n,\mu)(x)$, known from the measurements $\{H_{ij}\}_{i,j=1}^{n+1}$, are the coefficients in the relation of linear dependence
\begin{align*}
    \mu_1 \nabla u_1 + \dots + \mu_n \nabla u_n + \mu\nabla u_{n+1} = 0.
\end{align*}
We also define the operator $L_0^{\frac{1}{2}}\in \Psi^1$ with principal symbol $-\i \|A_0 \xi\|$. 
Our conclusions may be formulated as follows:
\begin{proposition} \label{prop:microloc}
    Let the measurements $\dH$ defined in \eqref{eq:dHcollection} satisfy Hypotheses \ref{hyp:det} and \ref{hyp:Z}. 
    \begin{itemize}
	\item[(i)] For $(\alpha,\beta) = (0,0)$ and $1\le \alpha\le \beta\le n-1$, there exist $\{Q_{\alpha\beta ij}\}_{1\le i\le j\le n}\in \Psi^0$ such that 
	    \begin{align}
		\sum_{1\le i,j\le n} Q_{\alpha\beta ij} \circ \dH_{ij} = T_{\alpha\beta} \quad \mod \Psi^{-1}. 
		\label{eq:param1}
	    \end{align}
	\item[(ii)] For any $1\le \alpha\le n-1$, there exist $\{B_{\alpha i}\}_{1\le i\le n}\in \Psi^0$ such that the following relation holds
	    \begin{align}
		L_0^{\frac{1}{2}} \circ B_{\alpha i} \circ L_i - R_\alpha \circ R = T_{0\alpha} \quad \mod \Psi^{-1}, 
		\label{eq:param2}
	    \end{align}
	    where the remainder $R_\alpha \circ R$ can be expressed as a zeroth-order linear combination of the components $T_{00}$ and $\{T_{pq}\}_{1\le p\le q \le n-1}$  reconstructed in (i). 
    \end{itemize}

\end{proposition}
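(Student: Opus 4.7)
The strategy is to compute the symbols of the measurement operators to principal and sub-principal order, establish the requisite symbol-level identities under Hypotheses \ref{hyp:det}--\ref{hyp:Z}, and then quantize via the standard pseudo-differential composition calculus to obtain the operator identities modulo $\Psi^{-1}$. Using $v_j = L_0^{-1}\nabla\cdot(\gamma\nabla u_j)$ in \eqref{eq:LPD} together with the principal symbol $|A_0\xi|^{-2}$ of $L_0^{-1}$, the composition formula yields
\[
    M_{ij}|_0 = \nabla u_i\odot\nabla u_j - \frac{1}{|A_0\xi|^2}\bigl[(\xi\cdot\gamma_0\nabla u_i)(\xi\odot\nabla u_j) + (\xi\cdot\gamma_0\nabla u_j)(\xi\odot\nabla u_i)\bigr],
\]
which already verifies Lemma \ref{lem:xieta}(i) by direct contraction with $\gamma_0\xi\odot\eta$, $\eta\cdot\xi=0$.

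For part (i), Lemma \ref{lem:xieta} asserts that under Hypothesis \ref{hyp:det} the family $\{M_{ij}|_0\}_{1\le i\le j\le n}$ spans the orthogonal complement $W := N^\perp\subset S_n(\Rm)$, where $N := \{\gamma_0\xi\odot\eta : \eta\cdot\xi = 0\}$ has dimension $n-1$. Using the identity $A_0^{-1}\hxi_p\cdot\gamma_0\xi = \hxi_p\cdot A_0\xi = |A_0\xi|\,\delta_{p0}$ (with $A_0$ the symmetric square root of $\gamma_0$) and $\eta\cdot\xi=0$, a direct check shows that the matrices $A_0^{-1}\hxi_\alpha\odot A_0^{-1}\hxi_\beta$ with $(\alpha,\beta) = (0,0)$ or $1\le\alpha\le\beta\le n-1$ all belong to $W$; a dimension count then gives that these $\tfrac{n(n+1)}{2}-(n-1)$ matrices form a basis of $W$. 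Hence, for each admissible $(\alpha,\beta)$, one can choose scalar symbols $Q_{\alpha\beta ij}(x,\xi)$, smooth in $x$ and homogeneous of degree $0$ in $\xi$, solving the symbol-level identity $\sum_{ij}Q_{\alpha\beta ij}(x,\xi)\,M_{ij}|_0(x,\xi) = A_0^{-1}\hxi_\alpha\odot A_0^{-1}\hxi_\beta$; quantizing and applying the composition formula yields \eqref{eq:param1} modulo $\Psi^{-1}$.

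For part (ii), the key structural observation is that $M_{ij}|_0$ depends linearly on $\nabla u_j$ (with coefficients depending only on $\nabla u_i$, $\xi$ and $\gamma_0$), so the pointwise linear dependence $\sum_{j=1}^n\mu_j\nabla u_j + \mu\nabla u_{n+1} = 0$ forces the principal symbol of $L_i$ to vanish. Consequently $L_i\in\Psi^{-1}$, and $L_0^{\frac{1}{2}}\circ L_i\in\Psi^0$ with principal symbol $-\i|A_0\xi|\,\sigma(L_i)|_{-1}$. Next, one expands each $\dH_{ij}$ one order further using the parametrix of $L_0$ and the composition formula for $L_0^{-1}\circ D_j$ with $D_j\gamma := \nabla\cdot(\gamma\nabla u_j)$; the principal cancellation annihilates the obvious contributions, and differentiating the linear dependence re-expresses the surviving second-derivative terms of $u_j$ as terms involving $\nabla\mu_j$ and $\nabla\mu$. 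After simplification, the projection $\pi_N(\sigma(L_i)|_{-1})$ onto $N$ takes the form $\gamma_0\xi\odot\eta_i(x,\xi)$ for vectors $\eta_i(x,\xi)\in\xi^\perp$ that are linear combinations of the orthogonal projections of the columns $Z_j$ of the matrix $Z$ of \eqref{eq:Zmat}, with coefficients involving $\nabla u_i$ and $\xi$.

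Under Hypothesis \ref{hyp:Z} the matrix $Z$ is invertible, so $\{\pi_{\xi^\perp}Z_j\}_{j=1}^n$ spans $\xi^\perp$; combined with Hypothesis \ref{hyp:det} (so that $[\nabla u_1|\cdots|\nabla u_n]$ is invertible), a bilinear-form rank argument shows that the map $i\mapsto\eta_i(x,\xi)$ is surjective onto $\xi^\perp$ for each $(x,\xi)$. This permits the selection of symbols $B_{\alpha i}(x,\xi)$, homogeneous of degree $0$ in $\xi$, satisfying
\[
    -\i|A_0\xi|\sum_{i=1}^n B_{\alpha i}(x,\xi)\,\sigma(L_i)|_{-1}(x,\xi) \equiv A_0^{-1}\hxi_0\odot A_0^{-1}\hxi_\alpha \pmod{W},
\]
for each $\alpha\in\{1,\ldots,n-1\}$. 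The residual symbol in $W$ is, by part (i), a $\Psi^0$ linear combination of the already-reconstructed $T_{00}$ and $\{T_{pq}\}_{1\le p\le q\le n-1}$, which constitutes the remainder $R_\alpha\circ R$ in \eqref{eq:param2}; quantizing produces the operator identity modulo $\Psi^{-1}$. The main technical obstacle is the sub-principal bookkeeping: because the principal cancellation already destroys the leading contributions to $L_i$, reliably extracting the surviving $\nabla\mu_j$-terms that encode the matrix $Z$ requires careful use of the full composition formula --- including the sub-principal term of the $L_0$ parametrix and the mixed correction $\partial_\xi\sigma(L_0^{-1})\cdot\partial_x\sigma(D_j)$ --- to verify the surjectivity claim used to construct the $B_{\alpha i}$.
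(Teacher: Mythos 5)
Your overall strategy coincides with the paper's. Part (i) is handled the same way: the principal symbols $M_{ij}|_0$ span the orthocomplement of $N=\gamma_0\xi\odot\{\xi\}^\perp$ by Lemma \ref{lem:xieta}(ii), the targets $A_0^{-1}\hxi_\alpha\odot\hxi_\beta A_0^{-1}$ for $(\alpha,\beta)=(0,0)$ and $1\le\alpha\le\beta\le n-1$ lie in that orthocomplement, and your dimension count is correct. The paper simply makes the choice of $Q_{\alpha\beta ij}$ explicit via the dual-basis coefficients $H^{pq}(\hxi_\alpha\cdot V_q)H^{pi}(\hxi_\beta\cdot V_p)$ rather than appealing to an abstract spanning argument; a word on why a smooth, $0$-homogeneous choice exists (e.g.\ the pseudo-inverse of the constant-rank Gram matrix of the $M_{ij}|_0$) would tighten your version, but this is minor.

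In part (ii), however, there is a genuine gap at the central step. Everything hinges on the claim that the $N$-components of the subprincipal symbols $\sigma_{L_i}|_{-1}$ span $N$; you assert this (``a bilinear-form rank argument shows\dots'') without ever computing $\sigma_{L_i}|_{-1}$, and you yourself flag the bookkeeping as the main obstacle. But this is exactly where Hypothesis \ref{hyp:Z} does its work, and the proposition is not proved until that computation is carried out. The paper's route: Proposition \ref{prop:Mijm1} computes $M_{ij}|_{-1}$ from the order $-3$ term of the parametrix of $L_0$ together with the mixed terms of the composition formula, which after summation against $(\mu_j,\mu)$ gives $A_0\,\sigma_{L_i}|_{-1}\,A_0=\i\|\xi_0\|^{-1}\bigl((\hxi_0\cdot V_i)(\Mm-2\hxi_0\odot\Mm\hxi_0)+\hxi_0\odot\Mm V_i\bigr)$ with $\Mm=\mu_j\Hm_j+\mu\Hm_{n+1}$; Lemma \ref{lem:Mmat} then shows $\Mm=-\mu A_0Z\Vm^T$, so Hypotheses \ref{hyp:det} and \ref{hyp:Z} yield invertibility of $\Mm$. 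Your vectors $\eta_i$ are, up to conjugation by $A_0$, the projections of $\Mm V_i$ onto $\hxi_0^\perp$, and these span $\hxi_0^\perp$ precisely because $\Mm$ and $\Vm$ are invertible --- that is the surjectivity you need. One must also verify that the complementary term $(\hxi_0\cdot V_i)(\Mm-2\hxi_0\odot\Mm\hxi_0)$ genuinely lies in the span of the symbols recovered in (i); the paper checks this by the contractions leading to \eqref{eq:Rexpr}, which is what makes the remainder $R_\alpha\circ R$ in \eqref{eq:param2} expressible through $T_{00}$ and $\{T_{pq}\}_{1\le p\le q\le n-1}$. So your skeleton and the roles you assign to the two hypotheses are correct, but without the explicit order $-1$ symbol computation and the identification $\Mm=-\mu A_0Z\Vm^T$ the argument remains a plan rather than a proof.
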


The presence of the $L_0^{\frac{1}{2}}$ term in part {\em (ii)} of Prop. \ref{prop:microloc} accounts for the loss of one derivative in the inversion process. From Prop. \ref{prop:microloc}, we can then obtain stability estimates of the form 
\begin{align}
    \begin{split}
	\|T_{00}\gamma\|_{H^{k+1}(\Omega')} &+ \!\!\!\!\!  \sum_{1\le p\le q\le n-1} \!\!\!\!\!\!\!\!   \|T_{pq}\gamma\|_{H^{k+1}(\Omega')} +  \!\! \!\!\! \sum_{1\le p\le n-1}  \!\! \!\!\! \|T_{0p}\gamma\|_{H^k(\Omega')} \\ 
	&\le C \|dH\|_{H^{k+1}(\Omega')} + C_2 \|\gamma\|_{L^2(\Omega')}.	
    \end{split}     
    \label{eq:microlocstab}
\end{align}
The above stability estimate holds for $k=0$ using the results of Proposition \ref{prop:microloc} and in fact for any $k\geq0$ updating by standard methods (not detailed here \cite{GS-CUP-94}) the parametrices in \eqref{eq:param1} and \eqref{eq:param2} to inversions modulo operators in $\Psi^{-k}$ (i.e., classical $\Psi$DO of order $-k$ \cite{GS-CUP-94}) provided that the coefficients $(\gamma_0,\{u_j\})$ are sufficiently smooth. The presence of the constant $C_2$ indicates that the reconstruction of $\gamma$ may be performed up to the existence of a finite dimensional kernel as an application of the Fredholm theory as in \cite{Kuchment2011}.

Equation \eqref{eq:microlocstab} means that some components of $\gamma$ are reconstructed with a loss of one derivative while other components are reconstructed with no loss. The latter components are those that can be spanned by the components $T_{00}\gamma$ and $\{T_{\alpha\beta}\gamma\}_{1\le\alpha,\beta\le n-1}$. Some algebra shows that the only such linear combination is $\sum_{i=0}^{n-1} T_{ii} \gamma$, which, using the fact that $\sum_{i=0}^{n-1} \hxi_i\otimes\hxi_i = \Imm_n$, can be computed as
\begin{align*}
    \sum_{i=0}^{n-1} T_{ii} \gamma &= (2\pi)^{-n} \int_{\Rm^n} e^{\i x\cdot\xi} A_0^{-1} \Imm_n A_0^{-1} : \hat\gamma(\xi)\ d\xi \\
    &= \gamma_0^{-1}: \left( (2\pi)^{-n} \int_{\Rm^n} e^{\i x\cdot\xi} \hat\gamma(\xi)\ d\xi \right) = \tr (\gamma_0^{-1}\gamma),
\end{align*}
confirming the heuristics of Sec. \ref{sec:heur}. It can be shown that all other components of $\gamma$ (i.e. any part of $\gamma_d$ in \eqref{eq:gammadecomp}) are, to some extent, spanned by the components $T_{0\alpha}\gamma$, and as such cannot be reconstructed with better stability than the loss of one derivative in light of \eqref{eq:microlocstab}. Combining the above results with \eqref{eq:changebasis}, we arrive at the main stability result of the paper:
\begin{align}\label{eq:stabfinal}
  \| \tr(\gamma_0^{-1}\gamma) \|_{H^{k}(\Omega')} + \|\gamma_d\|_{H^{k-1}(\Omega')} \le C \|\dH\|_{H^k(\Omega')} + C_2 \|\gamma\| _{L^2(\Omega')}.
\end{align}
Such an estimate holds for any $k\geq1$.

The above estimate holds with $C_2=0$ when $\gamma\mapsto dH(\gamma)$ is an injective (linear) operator. Injectivity cannot be verified by microlocal arguments since all inversions are performed up to smoothing operators; see \cite{B-Irvine-12} in the isotropic setting. In the next section, we obtain an injectivity result, which allows us to set $C_2=0$ in the above expression. However, the above stability estimate \eqref{eq:stabfinal} is essentially optimal. An optimal estimate, which follows from the above and the equations for $(\gamma,\{v_j\})$ is the following:
\begin{align*}
  \|M_{|0}\gamma \| _{H^{k}(\Omega')} + \|\gamma\|_{H^{k-1}(\Omega')} &\leq  C \|\dH\|_{H^k(\Omega')} + C_2 \|\gamma\| _{L^2(\Omega')} \\
  &\leq C'(\|M_{|0}\gamma \| _{H^{k}(\Omega')} + \|\gamma\|_{H^{k-1}(\Omega')} ).
\end{align*}
The left-hand-side inequality is a direct consequence of \eqref{eq:stabfinal} and the expression of $dH$. The right-hand side is a direct consequence of the expression of $dH$. The above estimate is clearly optimal. The operator $M_{|0}$ is of order $0$. If it were elliptic, then $\gamma$ would be reconstructed with no loss of derivative. However, $M_{|0}$ is not elliptic and the loss of ellipticity is precisely accounted for by the results in Lemma \ref{lem:xieta}. As we discussed above, it turns out that the only spatial coefficient controlled by $M_{|0}\gamma$ is $\tr(\gamma_0^{-1}\gamma)$, and hence \eqref{eq:stabfinal}.

\subsection{Explicit inversion:}

Now, allowing $\gamma$ to be supported up to the boundary, we present a variation of the non-linear resolution technique used in \cite{Monard2012b,Monard2012a}. First considering $n$ solutions generated by boundary conditions fulfilling Hypothesis \ref{hyp:det}, we establish an expression for $\gamma$ in terms of the remaining unknowns $(v_1,\dots,v_n)$:
\begin{align}
    \gamma = \gamma_0 ([\nabla U] H^{-1} \dH H^{-1} [\nabla U]^T - [\nabla V] H^{-1} [\nabla U]^T - [\nabla U] H^{-1} [\nabla V]^T ) \gamma_0,
    \label{eq:gammaelim}
\end{align}
where $[\nabla U]$ and $[\nabla V]$ denote $n\times n$ matrices whose $j$-th columns are $\nabla u_j$ and $\nabla v_j$, respectively, and where $H = \{H_{ij}\}_{i,j=1}^n$ and $\dH = \{\dH_{ij}\}_{i,j=1}^n$. In particular we find from \eqref{eq:gammaelim} the relation
\begin{align}
    \tr (\gamma_0^{-1} \gamma) = \tr (H^{-1} \dH) - 2\tr M, \quad M := ([\nabla V][\nabla U]^{-1})^T.
    \label{eq:trgamma}
\end{align}
Plugging \eqref{eq:gammaelim} back into the second equation in \eqref{eq:conductivity} for $1\le i\le n$, one can deduce a gradient equation for the quantity $\tr (\gamma_0^{-1}\gamma)$ which in turn allows to reconstruct $\tr (\gamma_0^{-1} \gamma)$ in a Lipschitz-stable manner with respect to the LPD $\{\dH_{ij}\}_{i,j=1}^n$ (i.e. without loss of derivative).

Now turning to the full reconstruction of $\gamma$, we consider an additional solution $u_{n+1}$ generated by a boundary condition fulfilling Hyp. \ref{hyp:Z}. The following proposition then establishes how to reconstruct $(v_1,\dots,v_n)$ from $\dH$:
\begin{proposition}\label{prop:scesv}
    Assume that $(g_1,\dots,g_{n+1})$ fulfill Hypotheses \ref{hyp:det} and \ref{hyp:Z} over $X$ and consider the linearized power densities $\dH = \{\dH_{ij}:\ 1\le i\le j\le n+1,\ i\ne n+1\}$. Then the solutions $(v_1,\dots,v_n)$ satisfy a strongly coupled elliptic system of the form
    \begin{align}
	-\nabla\cdot(\gamma_0 \nabla v_i) + W_{ij}\cdot\nabla v_j = f_i (\dH, \nabla (\dH)) \quad (X),\quad v_i|_{\partial X} = 0, \quad 1\le i\le n,
	\label{eq:scesv}
    \end{align}
    where the vector fields $W_{ij}$ are known and only depend on the behavior of $\gamma_0$, $Z$ and $u_1,\dots,u_n$, and where the functionals $f_i$ are linear in the data $\dH_{ij}$.
\end{proposition}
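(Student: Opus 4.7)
The plan is to substitute \eqref{eq:gammaelim} back into the PDE \eqref{eq:conductivity1} for each $v_i$ ($1\le i\le n$), extract the residual identities, and then combine them with the additional data $\{\dH_{i,n+1}\}_{i=1}^n$ afforded by Hypothesis \ref{hyp:Z} to produce a strongly coupled second-order elliptic system.

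First, using $[\nabla U]^T\gamma_0\nabla u_i = He_i$ together with the identity $\gamma_0[\nabla U]H^{-1} = [\nabla U]^{-T}$, formula \eqref{eq:gammaelim} reduces to
\[
   \gamma\nabla u_i = [\nabla U]^{-T}(\dH - N)_{\cdot i} - \gamma_0\nabla v_i,
   \qquad N_{jk} := \nabla v_j\cdot\gamma_0\nabla u_k.
\]
Plugging this into \eqref{eq:conductivity1}, the contribution $-\gamma_0\nabla v_i$ on the right cancels the leading term $-\nabla\cdot(\gamma_0\nabla v_i)$ on the left, yielding the $n$ scalar identities
\[
   \nabla\cdot\bigl([\nabla U]^{-T}(\dH-N)_{\cdot i}\bigr) = 0, \qquad i=1,\dots,n. \qquad (\star)
\]
Although $(\star)$ is genuinely second-order in the $v_j$ through the divergence of $N$, its principal symbol factors as a rank-one tensor involving $(\gamma_0\nabla u_i\cdot\xi)$ and the dual covector to $\nabla u_j$ evaluated against $\xi$, and so fails to be elliptic on its own.

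Next, I exploit the measurements $\dH_{i,n+1}$. Writing $\nabla u_{n+1} = \sum_k\mu_k\nabla u_k$ (so that $Z_k = \nabla\mu_k$ by Cramer's rule) and expanding $\dH_{i,n+1}$ produces the $n$ algebraic relations
\[
   \nabla u_i\cdot\gamma_0\bigl(\nabla v_{n+1} - \textstyle\sum_k \mu_k\nabla v_k\bigr) = \dH_{i,n+1} - \textstyle\sum_k \mu_k \dH_{ik},
\]
which, under Hypothesis \ref{hyp:det}, determine the vector field $w:=\nabla v_{n+1}-\sum_k\mu_k\nabla v_k$ entirely in terms of the data. Substituting $\nabla v_{n+1} = w + \sum_k\mu_k\nabla v_k$ into \eqref{eq:conductivity1} for $v_{n+1}$ and then using the identity for $\gamma\nabla u_k + \gamma_0\nabla v_k$ established above produces the single additional scalar relation
\[
   \sum_{k=1}^n Z_k\cdot [\nabla U]^{-T}(\dH-N)_{\cdot k} = -\nabla\cdot(\gamma_0 w). \qquad (\star\star)
\]

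It remains to assemble $(\star)$ and $(\star\star)$ into the advertised form \eqref{eq:scesv}. Hypothesis \ref{hyp:Z} guarantees that $(Z_1,\dots,Z_n)$ is a pointwise frame of $\Rm^n$, supplying the second independent direction missing from the rank-one symbol of $(\star)$. Combining $(\star)$ with suitable derivatives of $(\star\star)$ (these are what introduce the $\nabla(\dH)$ dependence into $f_i$) and reshuffling, one reintroduces the diagonal top-order term $-\nabla\cdot(\gamma_0\nabla v_i)$ in the $i$-th equation, packages all first-order residuals into couplings of the form $W_{ij}\cdot\nabla v_j$, and gathers the remaining data-only terms into $f_i(\dH,\nabla(\dH))$; the Dirichlet condition $v_i|_{\partial X}=0$ is inherited directly from \eqref{eq:conductivity1}. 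The main obstacle is precisely this final recombination: one must verify that the linear algebra involving the dual frame to $(\nabla u_1,\dots,\nabla u_n)$ and the independent frame $(Z_1,\dots,Z_n)$ genuinely restores the diagonal principal symbol required for strong ellipticity, after which reading off the explicit fields $W_{ij}$ and functionals $f_i$ is a matter of bookkeeping.
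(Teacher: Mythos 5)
Your setup is sound as far as it goes: eliminating $\gamma$ via \eqref{eq:gammaelim} (your $(\star)$ is just the divergence of \eqref{eq:rel1} and is correct, though the paper does not actually use these degenerate equations in the final system), and observing that the measurements $\dH_{i,n+1}$ determine the vector field $w=\nabla v_{n+1}-\sum_k\mu_k\nabla v_k$ pointwise, which is the paper's \eqref{eq:vvi}. But the proof stops exactly where the proposition's content begins. From the knowledge of $w$ you extract only the single scalar relation $(\star\star)$ (the paper's \eqref{eq:algrelv2}, obtained by applying $\nabla\cdot(\gamma_0\,\cdot)$), and you never extract the other, indispensable piece of information: taking the exterior derivative of $w^\flat$ gives $\sum_k Z_k^\flat\wedge dv_k=-dw^\flat$, i.e.\ the paper's \eqref{eq:algrelv}, an $n(n-1)/2$-dimensional family of \emph{first-order} constraints equivalent to $Z_q^\star\cdot\nabla v_p-Z_p^\star\cdot\nabla v_q=\omega(Z_p^\star,Z_q^\star)$. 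This antisymmetry relation is the mechanism by which ellipticity is restored: applying $Z_i^\star\cdot\nabla$ to $(\star\star)$ produces $\sum_p(\gamma_0 Z_p\cdot\nabla)(Z_i^\star\cdot\nabla)v_p$ modulo commutators, and only after swapping $(Z_i^\star\cdot\nabla)v_p\to(Z_p^\star\cdot\nabla)v_i+\omega(Z_p^\star,Z_i^\star)$ does one obtain the diagonal operator $\sum_p(\gamma_0 Z_p\cdot\nabla)(Z_p^\star\cdot\nabla)v_i$, whose principal symbol is $\xi\cdot\gamma_0\xi$ because $\sum_p Z_p\otimes Z_p^\star=\Imm_n$; this is where Hypothesis \ref{hyp:Z} and the dual frame $Z^\star=Z^{-T}$ are really used, and it is what produces the fields $W_{ij}$ in \eqref{eq:Wij}.

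Without that relation your plan of ``combining $(\star)$ with suitable derivatives of $(\star\star)$'' cannot succeed. Count symbols: for fixed $i$ the second-order symbol of $(\star)$ in $(\hat v_1,\dots,\hat v_n)$ is the rank-one matrix $(\gamma_0\nabla u_i\cdot\xi)\,H^{pq}(\gamma_0\nabla u_q\cdot\xi)$, and every first derivative of $(\star\star)$ contributes a row proportional to the single covector $p\mapsto(\gamma_0 Z_p\cdot\xi)$. All rows you can generate therefore span at most a two-dimensional space, so for $n\ge3$ the assembled second-order system has principal symbol of rank at most $2<n$ and is not elliptic, no matter how the equations are reshuffled. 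The cure is not more second-order equations but the first-order wedge constraints, which convert off-diagonal second-order couplings into diagonal ones plus first-order terms. Since you explicitly defer this ``final recombination'' rather than carry it out, the argument as written has a genuine gap at the decisive step.
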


When the vector fields $W_{ij}$ are bounded, system \eqref{eq:scesv} satisfies a Fredholm alternative from which we deduce that if \eqref{eq:scesv} with a trivial right-hand side admits no non-trivial solution, then $(v_1,\dots,v_n)$ is uniquely reconstructed from \eqref{eq:scesv}. We can then reconstruct $\gamma$ from \eqref{eq:gammaelim}.

\begin{remark}[Case $\gamma_0$ constant]
    In the case where $\gamma_0$ is constant, choosing solutions as in Remark \ref{rem:const}, one arrives at a system of the form \eqref{eq:scesv} where $W_{ij} = 0$ if $i\ne j$, so that the system is decoupled and clearly injective.
\end{remark}

The conclusive theorem for the explicit inversion is thus given by

\begin{theorem}\label{thm:explicit}
    Assume that $(g_1,\dots,g_{n+1})$ fulfill Hypotheses \ref{hyp:det} and \ref{hyp:Z} over $X$ and consider the linearized power densities $\dH = \{\dH_{ij}:\ 1\le i\le j\le n+1,\ i\ne n+1\}$. Assume further that the system \eqref{eq:scesv} with trivial right-hand sides has no non-trivial solution. Then $\gamma$ is uniquely determined by $\dH$ and we have the following stability estimate
    \begin{align}
      \| \tr  (\gamma_0^{-1}\gamma)\|_{H^1(X)} + \|\gamma \|_{L^2(X)} \le C \|\dH \|_{H^1(X)}.
	\label{eq:stabgammaexplicit}
    \end{align}
\end{theorem}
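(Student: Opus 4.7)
\textbf{Proof strategy for Theorem \ref{thm:explicit}.} The plan is to reduce the theorem to Proposition \ref{prop:scesv} together with Fredholm theory for the elliptic system \eqref{eq:scesv} and the algebraic identities \eqref{eq:gammaelim} and \eqref{eq:trgamma}. Both uniqueness of $\gamma$ and the stability estimate \eqref{eq:stabgammaexplicit} will follow from the unique solvability of \eqref{eq:scesv} for $(v_1,\dots,v_n)$, after which the passage to $\gamma$ and to $\tr(\gamma_0^{-1}\gamma)$ is purely algebraic.

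The first step is to invoke Proposition \ref{prop:scesv}, which produces the strongly coupled elliptic system \eqref{eq:scesv} for $(v_1,\dots,v_n)$ with homogeneous Dirichlet data and right-hand sides $f_i$ depending linearly on $(\dH,\nabla\dH)$. Its principal part is diagonal, each diagonal block being the uniformly elliptic operator $-\nabla\cdot(\gamma_0\nabla)$, and the first-order coupling $W_{ij}\cdot\nabla v_j$ has bounded coefficients, since $\gamma_0$, $u_1,\dots,u_{n+1}$, and the matrix $Z$ are smooth by hypothesis. The induced linear map $H_0^1(X)^n\to H^{-1}(X)^n$ is therefore Fredholm of index zero, and the standing assumption that the homogeneous system admits only the trivial solution forces injectivity, hence unique solvability by the Fredholm alternative. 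This already yields uniqueness of $(v_1,\dots,v_n)$, and therefore of $\gamma$ via \eqref{eq:gammaelim}.

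Next, since $\dH\in H^1(X)$ gives $f_i\in L^2(X)$, standard elliptic regularity up to the boundary provides
$$\|v_i\|_{H^2(X)} \le C\,\|\dH\|_{H^1(X)}, \quad 1\le i\le n.$$
Plugging into \eqref{eq:gammaelim}, which expresses $\gamma$ bilinearly in $\dH$ and $\nabla V$ modulated by smooth $L^\infty$ coefficients (built from $\gamma_0$, $[\nabla U]$ and $H^{-1}$), and using the embedding $\nabla V\in H^1\hookrightarrow L^2$, yields $\|\gamma\|_{L^2(X)}\le C(\|\dH\|_{L^2}+\|\nabla V\|_{L^2})\le C'\|\dH\|_{H^1(X)}$. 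For the scalar trace, identity \eqref{eq:trgamma} gives
$$\tr(\gamma_0^{-1}\gamma) = \tr(H^{-1}\dH) - 2\tr M, \qquad M = ([\nabla V][\nabla U]^{-1})^T,$$
and smoothness of $H^{-1}$ and $[\nabla U]^{-1}$, combined with $\dH\in H^1$ and $\nabla V\in H^1$, yields $\tr(\gamma_0^{-1}\gamma)\in H^1(X)$ with the desired bound. Adding the two estimates produces \eqref{eq:stabgammaexplicit}.

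The main technical point will be setting up the Fredholm framework for \eqref{eq:scesv}: confirming that its principal part is a diagonal uniformly elliptic system in the sense of Douglis--Nirenberg, that the vector fields $W_{ij}$ furnished by Proposition \ref{prop:scesv} are bounded so that the first-order coupling acts as a compact perturbation on $H_0^1(X)^n$, and that $H^2$-regularity up to $\partial X$ can be extracted under sufficient smoothness of $\partial X$ and of the background. Beyond this, the entire argument reduces to pointwise algebraic manipulations and standard products of Sobolev functions against smooth multipliers.
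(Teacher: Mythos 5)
Your overall framework (Fredholm alternative for the coupled system from Proposition \ref{prop:scesv}, then algebraic recovery of $\gamma$ via \eqref{eq:gammaelim}) matches the paper's, and the uniqueness statement together with the $L^2$ bound on $\gamma$ go through as you describe. However, there is a genuine gap in your derivation of the $H^1$ estimate for $\tr(\gamma_0^{-1}\gamma)$. You claim that $\dH\in H^1(X)$ gives $f_i\in L^2(X)$ and hence $v_i\in H^2(X)$ by elliptic regularity. This is false: inspecting \eqref{eq:fi}, the source terms are $f_i = -Z_i^\star\cdot\nabla f + (\gamma_0 Z_p\cdot\nabla)(\omega(Z_p^\star,Z_i^\star))$, where $f$ and $\omega$ already contain the first derivatives of the data through the vector fields $Y_q = \nabla\bigl(H^{pq}(\dH_{n+1,p}+\mu^{-1}\mu_i\dH_{ip})\bigr)$. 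Thus $f_i$ involves \emph{second} derivatives of $\dH$, so $\dH\in H^1$ only yields $f_i\in H^{-1}(X)$ and $v_i\in H^1_0(X)$ (this is exactly the bound $\|\bfv\|_{H^1_0}\le C\|\dH\|_{H^1}$ the paper obtains). With only $\nabla V\in L^2$, your passage through \eqref{eq:trgamma} gives $\tr M\in L^2$ and hence $\tr(\gamma_0^{-1}\gamma)\in L^2$, not $H^1$, so the first half of \eqref{eq:stabgammaexplicit} is not established by your argument.

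The paper closes this by a structural device you are missing: it linearizes the gradient identity for $\log\det\gamma^\varepsilon$ from the nonlinear theory, obtaining an explicit formula for $\nabla\tr(\gamma_0^{-1}\gamma)$ whose right-hand side involves only $\nabla\tr(H^{-1}\dH)$, $\nabla(H^{-1}\dH H^{-1})$, $\gamma$, $\nabla v_j$, and smooth background quantities --- all controlled in $L^2$ by $\|\dH\|_{H^1}$, $\|\gamma\|_{L^2}$ and $\|\bfv\|_{H^1}$. No second derivatives of $v$ appear, which is precisely what lets one regain the derivative on the trace component without the (unavailable) $H^2$ regularity of the $v_i$. You should replace your elliptic-regularity step by this gradient equation, or otherwise justify why the trace of \eqref{eq:gammaelim} has better regularity than its individual terms.
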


\subsection{Outline}

We cover the microlocal inversion in Sec. \ref{sec:microloc}. Linear algebraic and pseudo-differential preliminaries are given in Sec. \ref{ssec:prelim}. The leading-order symbols of order $0$ of the LPD functionals are computed in Sec. \ref{ssec:symbol0} and a proof of Lemma \ref{lem:xieta} is given. The symbols of order $-1$ are then computed in \ref{ssec:symbolm1} and the proof Proposition \ref{prop:microloc} is given in Sec. \ref{ssec:proofs}. 
We then treat the explicit inversion in Sec. \ref{sec:explicit}. Starting with some preliminaries in Sec. \ref{ssec:prelim2}, we derive some crucial relations in Sections \ref{ssec:der1} and \ref{ssec:der2}, before proving Proposition \ref{prop:scesv} and Theorem \ref{thm:explicit} in Sec. \ref{ssec:proofs2}.

\section{Microlocal inversion} \label{sec:microloc}

\subsection{Preliminaries} \label{ssec:prelim}

\paragraph{Linear algebra.} In the following, we consider the $n\times n$ matrices $M_n(\Rm)$ with the inner product structure
\begin{align}
    A:B = \tr (AB^T) = \sum_{i,j=1}^n A_{ij}B_{ij},
    \label{eq:innprod}
\end{align}
for which $M_n(\Rm)$ admits the orthogonal decomposition $A_n(\Rm)\oplus S_n(\Rm)$. For two vectors $U= (u_1,\dots,u_n)^T$ and $V=(v_1,\dots,v_n)^T$ in $\Rm^n$ we denote by $U\otimes V$ the matrix with entries $\{u_i v_j\}_{i,j=1}^n$, and we also define the symmetrized outer product
\begin{align}
    U\odot V:= \frac{1}{2} (U\otimes V + V\otimes U).
    \label{eq:outerprod}
\end{align}
With $\cdot$ denoting the standard dotproduct on $\Rm^n$, we have the following identities
\begin{align}
    2 U\odot V: X\odot Y &= (U\cdot X)(V\cdot Y) + (U\cdot Y)(V\cdot X), \qquad U,V,X,Y \in \Rm^n, \label{eq:id} \\
    U\cdot MU &= M:U\otimes U = M:U\odot U, \qquad U\in \Rm^n, M \in M_n(\Rm). 
\end{align}

\paragraph{Pseudo-differential calculus.} Recall that we denote the set of {\em symbols} of order $m$ on $X$ by $S^m(X)$, which is the space of functions $p\in C^{\infty}(X\times\Rm^n)$ such that for all multi-indices $\alpha$ and $\beta$ and every compact set $K\subset X$ there is a constant $C_{\alpha,\beta,K}$ such that
\begin{align*}
    \sup_{x\in K}|D_x^\beta D_\xi^\alpha p(x,\xi)|\leq C_{\alpha,\beta,K}(1+|\xi|)^{m-|\alpha|}
\end{align*}
We denote the operator $p(x,D)$ as
\begin{align*}
  p(x,D)\gamma(x)= (2\pi)^{-n}\int_{\Rm^n} e^{\i x\cdot\xi}p(x,\xi)\hat \gamma(\xi)d\xi
\end{align*}
and the set of {\em pseudo-differential operators} ($\Psi$DO) of order $m$ on $X$ by $\Psi^m(X)$, where
\begin{align*}
    \Psi^m(X)=\{p(x,D):p\in S^m(X)\}.
\end{align*}
Suppose $\{m_j\}^\infty_0$ is strictly decreasing and $\lim m_j=-\infty$, and suppose $p_j\in S^{m_k}(X)$ for each $j$. We denote an {\em asymptotic expansion} of the symbol $p\in S^{m_0}(X)$ as $p\thicksim\sum_0^{\infty}p_j$
if
\begin{align*}
    p-\sum_{j<k}p_j\in S^{m_k}(X), \quad \text{for all } k>0.
\end{align*}
Given two $\Psi$DO $P$ and $Q$ with respective symbols $\sigma_P$ and $\sigma_Q$ and orders $d_P$ and $d_Q$, we will make repetitive use of the symbol expansion of the product operator $QP\equiv Q\circ P$ (see \cite[Theorem (8.37)]{Folland1995} for instance)
\begin{align}
    \sigma_{QP}(x,\xi) \sim \sigma_Q \sigma_P + \frac{1}{\i} \nabla_\xi \sigma_Q \cdot \nabla_x \sigma_{P} + \O(|\xi|^{d_Q+d_P-2}),
    \label{eq:prodpsidos}
\end{align}
where $\O(|\xi|^{\alpha})$ denotes a symbol of order at most $\alpha$. As we will need to compute products of three $\Psi$DO $R$, $P$ and $Q$, we write the following formula for later use, obtained by iteration of \eqref{eq:prodpsidos}
\begin{align}
    \begin{split}
	\sigma_{RQP} = \sigma_R \sigma_Q \sigma_P &+ \frac{1}{\i} (\sigma_R \nabla_\xi \sigma_Q\cdot\nabla_x \sigma_P + \sigma_Q \nabla_\xi \sigma_R \cdot\nabla_x \sigma_P + \sigma_P \nabla_\xi \sigma_R\cdot\nabla_x \sigma_Q) \\
	&\qquad \qquad  + \O (|\xi|^{d_R+d_Q+d_P-2}).	
    \end{split}    
    \label{eq:prodpsidos2}
\end{align}
In the next derivations, some operators have matrix-valued principal symbols. However we will only compose them with operators with scalar symbols, so that the above calculus remains valid. 

\subsection{Symbol calculus for the LPD, properties of $M_{ij}|_0$ and proof of Lemma \ref{lem:xieta}} \label{ssec:symbol0}
Writing $v_i(x) = (2\pi)^{-n} \int_{\Rm^n} e^{\i x\cdot\xi} \hat v_i(\xi)\ d\xi$ and $\gamma(x) = (2\pi)^{-n} \int_{\Rm^n} e^{\i x\cdot\xi} \hat \gamma(\xi)\ d\xi$ (understood in the componentwise sense), we have
\begin{align*}
    L_0 v_i := -\nabla\cdot(\gamma_0\nabla v_i) &= (2\pi)^{-n} \int_{\Rm^n} e^{\i x\cdot\xi} \left( \xi\cdot\gamma_0\xi - \i(\nabla\cdot\gamma_0)\cdot\xi \right) \hat v_i(\xi)\ d\xi, \\
    P_i \gamma \equiv \nabla\cdot(\gamma\nabla u_i) &= (2\pi)^{-n} \int_{\Rm^n} e^{\i x\cdot\xi} \left( \i\xi\odot\nabla u_i + \nabla^2 u_i \right):\hat\gamma(\xi)\ d\xi.
\end{align*}
Thus equation \eqref{eq:conductivity1} reads $L_0 v_i = P_i \gamma$, where the operators $L_0 := -\nabla\cdot(\gamma_0\nabla)$ and $P_i$ have respective symbols
\begin{align}
    \sigma_{L_0} &= l_2 + l_1, \qquad l_2 := \xi\cdot\gamma_0\xi \in S^{2} \qandq l_1:= - \i(\nabla\cdot\gamma_0)\cdot\xi \in S^1, \label{eq:L0} \\
    \sigma_{P_i} &= p_{i,1} + p_{i,0}, \qquad p_{i,1} := \i\xi\odot \nabla u_i \in (S^1)^{n\times n} \qandq p_{i,0} := \nabla^2 u_i \in (S^0)^{n\times n}. \label{eq:Pi}
\end{align}
For $Y$ a smooth vector field, we will also need in the sequel to express the operator $Y\cdot\nabla$ as $\Psi$DO, the symbol of which is denoted $\sigma_{Y\cdot\nabla} = \sigma_{Y\cdot\nabla}|_1 := \i\xi\cdot Y$.

We now write $\dH_{ij}$ as a $\Psi$DO of $\gamma$ with symbol $M_{ij}$ as in \eqref{eq:dHijpsido}. $\dH_{ij}$ belongs to $\Psi^{0}(X)$ and we will compute in this paper the first two terms in the expansion of $M_{ij}$ (call them $M_{ij}|_0$ and $M_{ij}|_{-1}$), which in turn relies on constructing parametrices of $L_0$ of increasing order and doing some computations on symbols of products of $\Psi$DO based on formula \eqref{eq:prodpsidos}. 
If $Q$ is a parametrix of $L_0$ modulo $\Psi^{-m}$, i.e. $K\equiv QL_0 - Id \in \Psi^{-m}$, then straightforward computations based on the relation $L_0 v_i = P_i \gamma$ yield the following relation
\begin{align}
    dH_{ij} (\gamma) &= \gamma:\nabla u_i \odot \nabla u_j + (\gamma_0 \nabla u_i\cdot\nabla) \circ Q\circ P_j \gamma + (\gamma_0 \nabla u_j\cdot\nabla) \circ Q\circ P_i \gamma + K_{ij}\gamma, \label{eq:dHijparam}\\
    \!\!\!\!\where &\quad K_{ij} := (\gamma_0\nabla u_i\cdot\nabla) \circ KL_0^{-1}P_j + (\gamma_0\nabla u_j\cdot\nabla) \circ KL_0^{-1}P_i.	\label{eq:Kp}        
\end{align}
For any $i$, $L_0^{-1}P_i$ denotes the operator $\gamma\mapsto v_i$ where $v_i$ solves \eqref{eq:conductivity1}, and standard elliptic theory allows to claim that $L_0^{-1}P_i$ smoothes by one derivative so that the error operator $K_{ij}$ defined in \eqref{eq:Kp} smoothes by $m$ derivatives. In particular, upon computing a parametrix $Q$ of $L_0$ modulo $\Psi^{-m}$, the first three terms in \eqref{eq:dHijparam} are enough to construct the principal part of the symbol $M_{ij}$ modulo $\Psi^{-m}$. 

\paragraph{Computation of $M_{ij}|_0$.} In light of the last remark, we first compute a parametrix $Q$ of $L_0$ modulo $\Psi^{-1}$, that is, since $L_0\in \Psi^{2}$, we look for a principal symbol of the form $\sigma_Q = q_{-2} + \O(|\xi|^{-3})$. Clearly, we easily obtain $q_{-2} = l_2^{-1} = (\xi\cdot\gamma_0\xi)^{-1}$. In this case, the principal symbol of $\dH_{ij}$ at order zero is given by, according to \eqref{eq:dHijparam} and \eqref{eq:prodpsidos},
\begin{align*}
    M_{ij}|_0 &= \nabla u_i\odot\nabla u_j + (\sigma_{\gamma_0\nabla u_i\cdot\nabla}|_1)\ q_{-2}\ p_{j,1} + (\sigma_{\gamma_0\nabla u_j\cdot\nabla}|_1)\ q_{-2}\ p_{i,1} \\
    &= \nabla u_i\odot\nabla u_j - \frac{1}{\xi\cdot\gamma_0\ \xi} \left( (\gamma_0\nabla u_i\cdot \xi) (\xi\odot\nabla u_j) + (\gamma_0\nabla u_j\cdot \xi) (\xi\odot\nabla u_i) \right).
\end{align*}
$M_{ij}|_0$ admits a somewhat more symmetric expression if pre- and post-multiplied by $A_0$, the unique positive squareroot of $\gamma_0$, so that we may write,
\begin{align}
    M_{ij}|_0 (x,\xi) = A_0^{-1} \left( V_i\odot V_j - (\hxi_0 \cdot V_i) \hxi_0\odot V_j - (\hxi_0\cdot V_j) \hxi_0\odot V_i  \right) A_0^{-1},
    \label{eq:Mij0_2}
\end{align}
where we have defined $\xi_0 := A_0\xi$ and $\hat{x} := |x|^{-1} x$ for any $x\in \Rm^n-\{0\}$ as well as $V_i:= A_0\nabla u_i$. This last expression motivates the proof of Lemma \ref{lem:xieta}.
\begin{proof}[Proof of Lemma \ref{lem:xieta}] {\bf Proof of {\em (i)}:} Let $\eta$ such that $\eta \cdot \xi = 0$, and denote $\eta' := A_0 ^{-1}\eta$ so that $\eta'\cdot\xi_0 = 0$. Then using identity \eqref{eq:id} and \eqref{eq:Mij0_2}, we get
    \begin{align*}
	2 \|\xi_0\|^{-1} M_{ij}|_0(x,\xi) &: \gamma_0 \xi \odot\eta \dots \\
	&= 2 \left[ V_i\odot V_j - (\hxi_0\cdot V_i) \hxi_0\odot V_j - (\hxi_0\cdot V_j) \hxi_0\odot V_i  \right]:\hxi_0\odot\eta' \\
	&= (V_i\cdot \hxi_0) (V_j\cdot \eta') + (V_i\cdot\eta')(V_j\cdot \hxi_0)  \\
	&\quad - (\hxi_0\cdot V_i) (\hxi_0\cdot\hxi_0) (V_j\cdot \eta') - (\hxi_0\cdot V_i)(\hxi_0\cdot\eta')(V_j\cdot\hxi_0)  \\
	&\quad - (\hxi_0\cdot V_j) (\hxi_0\cdot \hxi_0) (V_i\cdot\eta') - (\hxi_0\cdot V_j) (\hxi_0\cdot\eta') (\hxi_0\cdot V_i)\\
	&=0,
    \end{align*}
    where we have used $\hxi_0\cdot\hxi_0=1$ and $\hxi_0\cdot\eta' = 0$, thus {\em (i)} holds. \smallskip

    {\bf Proof of {\em (ii)}:} Recall that 
    \begin{align*}
	M_{ij}|_0 : P = \left[ V_i\odot V_j - (\hxi_0\cdot V_i) \hxi_0\odot V_j - (\hxi_0\cdot V_j) \hxi_0\odot V_i \right]: A_0^{-1} P A_0^{-1}. 
    \end{align*}
    We write $S_n(\Rm)$ as the direct orthogonal sum of three spaces:
    \begin{align}
	S_n(\Rm) = \left( \Rm\ \hxi_0\otimes \hxi_0 \right) \oplus \left(\{\hxi_0\}^\perp \odot \{\xi_0\}^\perp \right) \oplus \left( \hxi_0 \odot \{\hxi_0\}^\perp \right) ,
	\label{eq:Sndecomp}
    \end{align} 
    with respective dimensions $1$, $n(n-1)/2$ and $n-1$. Decomposing $A_0^{-1} P A_0^{-1}$ uniquely into this sum, we write $A_0^{-1} P A_0^{-1} = P_1 + P_2 + P_3$. Direct calculations then show that 
    \begin{align*}
	M_{ij}|_0 : A_0^{-1} P A_0^{-1} = V_i\odot V_j:(- P_1 + P_2), \quad 1\le i\le j\le n.
    \end{align*}
    Since $\{V_i\}_{i=1}^n$ is a basis of $\Rm^n$, $\{V_i\odot V_j\}_{1\le i\le j\le n}$ is a basis of $S_n(\Rm)$ and thus \eqref{eq:Portho} implies that
    \begin{align*}
	-P_1 + P_2 = 0, \qquad \text{i.e.} \qquad P_1 = P_2 = 0.
    \end{align*}
    Therefore $P =  A_0 P_3 A_0$ with $P_3 = \hxi_0 \odot \eta'$ for some $\eta'\cdot\hxi_0 = 0$, so $P = \gamma_0 \xi \odot \eta$ with $\eta$ proportional to $A_0 \eta'$, i.e. such that $\eta\cdot\xi = 0$, thus the proof is complete.  
\end{proof}

In other words, {\bf all} symbols of order zero $M_{ij}|_0 (x,\xi)$ are orthogonal to the $(x,\xi)$-dependent $n-1$-dimensional subspace of symmetric matrices $\gamma_0 \xi \odot \{\xi\}^\perp$. One must thus compute the next term in the symbol exampansion of the operators $\dH_{ij}$, i.e. $M_{ij}|_{-1}$. We will then show that enough symbols of the form $M_{ij}|_0 + M_{ij}|_{-1}$ will suffice to span the entire space $S_n(\Rm)$ for every $x\in \Omega'$ and $\xi\in \Sm^1$, so that the corresponding family of operators is elliptic as a function of $\gamma$.

\subsection{Computation of $M_{ij}|_{-1}$} \label{ssec:symbolm1}

As the previous section explained, the principal symbols $M_{ij}|_0$ can never span $S_n(\Rm)$. Therefore, we compute the next term $M_{ij}|_{-1}$ in their symbol expansion. We must first construct a parametrix $Q$ of $L_0$ modulo $\Psi^{-2}$, i.e. of the form 
\begin{align}
    \sigma_Q = q_{-2} + q_{-3} + \O(|\xi|^{-4}),\quad q_i \in S^{i}.
    \label{eq:paramm2}
\end{align}

\begin{lemma}\label{lem:qm2m3}
    The symbols $q_{-2}$ and $q_{-3}$ defined in \eqref{eq:paramm2} have respective expressions
    \begin{align}
	q_{-2} &= l_2^{-1} = (\xi\cdot\gamma_0\xi)^{-1}, \label{eq:qm2} \\
	q_{-3} &= l_2^{-3} \i\ \xi_p\xi_q\xi_j \left( [\gamma_0]_{pq} \partial_{x_i}[\gamma_0]_{ij} - 2 [\gamma_{0}]_{ij} \partial_{x_i} [\gamma_0]_{pq}\right). \label{eq:qm3}
    \end{align}
\end{lemma}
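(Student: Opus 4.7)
The plan is to construct the parametrix $Q$ order by order by imposing the relation $\sigma_{QL_0} \equiv 1 \mod S^{-2}$ and matching homogeneous terms in $\xi$. This is a standard elliptic parametrix construction, and the only care needed is indexing.

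First, I would recall the composition formula in the form
\begin{align*}
    \sigma_{QL_0}(x,\xi) \sim \sum_{\alpha}\frac{1}{\alpha!}\,\partial_\xi^\alpha\sigma_Q(x,\xi)\; D_x^\alpha \sigma_{L_0}(x,\xi), \qquad D_x = \tfrac{1}{\i}\partial_x,
\end{align*}
plug in $\sigma_{L_0} = l_2 + l_1$ from \eqref{eq:L0} and the ansatz $\sigma_Q = q_{-2}+q_{-3}+\mathcal{O}(|\xi|^{-4})$, and sort the result by degree of homogeneity in $\xi$. Only the terms with $|\alpha|\le 1$ contribute to the orders I need: a term $\partial_\xi^\alpha q_{-k} D_x^\alpha l_{j}$ lives in $S^{-k+j-|\alpha|}$, so I keep the combinations with $-k+j-|\alpha|\in\{0,-1\}$.

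At order $0$ only $q_{-2}\,l_2 = 1$ survives, yielding \eqref{eq:qm2} directly. At order $-1$ I collect three contributions:
\begin{align*}
    q_{-3}\,l_2 \;+\; q_{-2}\,l_1 \;+\; \tfrac{1}{\i}\,\partial_{\xi_k} q_{-2}\;\partial_{x_k} l_2 \;=\; 0.
\end{align*}
Solving for $q_{-3}$ gives
\begin{align*}
    q_{-3} \;=\; -l_2^{-1}\Bigl( q_{-2}\,l_1 + \tfrac{1}{\i}\,\partial_{\xi_k} q_{-2}\,\partial_{x_k} l_2\Bigr).
\end{align*}

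Next I would compute the two derivatives explicitly. Using $q_{-2} = l_2^{-1}$ with $l_2 = [\gamma_0]_{pq}\xi_p\xi_q$, a direct calculation gives $\partial_{\xi_k} q_{-2} = -2\,l_2^{-2}\,[\gamma_0]_{kj}\xi_j$, and $\partial_{x_k} l_2 = \xi_p\xi_q\,\partial_{x_k}[\gamma_0]_{pq}$. Substituting, and using $l_1 = -\i\,\partial_{x_i}[\gamma_0]_{ij}\,\xi_j$ from \eqref{eq:L0}, I obtain
\begin{align*}
    q_{-3} \;=\; \i\,l_2^{-2}\,\xi_j\,\partial_{x_i}[\gamma_0]_{ij} \;-\; 2\i\,l_2^{-3}\,\xi_j\xi_p\xi_q\,[\gamma_0]_{ij}\,\partial_{x_i}[\gamma_0]_{pq}.
\end{align*}
Finally I would rewrite the first term with a common factor $l_2^{-3}$ by multiplying and dividing by $l_2 = [\gamma_0]_{pq}\xi_p\xi_q$, yielding exactly \eqref{eq:qm3}.

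The only real obstacle is bookkeeping: keeping track of the $\tfrac{1}{\i}$ from $D_x$, the factor $2$ coming from the symmetry of $[\gamma_0]_{pq}$ when differentiating $l_2$ in $\xi$, and the cosmetic renaming of dummy indices needed to match the compact form \eqref{eq:qm3}. None of this is conceptually deep, but a sign or factor-of-two error at this stage would propagate into the computation of $M_{ij}|_{-1}$ in Section \ref{ssec:symbolm1}.
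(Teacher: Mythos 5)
Your proposal is correct and follows essentially the same route as the paper: impose $\sigma_{QL_0}\sim 1$, match the terms of homogeneity $0$ and $-1$ using the composition formula, and solve successively for $q_{-2}$ and $q_{-3}$; your intermediate expression agrees with \eqref{eq:qm3} after the common-denominator step. The only difference is cosmetic (you keep $\partial_{\xi_k}q_{-2}$ explicit rather than rewriting it via $\nabla_\xi l_2$ as the paper does), and all signs and factors check out.
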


\begin{proof}[Proof of Lemma \ref{lem:qm2m3}] Using formula \eqref{eq:prodpsidos} with $(Q,P)\equiv(Q,L_0)$, and using the expansions of $\sigma_Q$ and $\sigma_{L_0}$, we get
\begin{align*}
    \sigma_{QL_0} \sim q_{-2} l_2 + (q_{-2} l_1 + q_{-3} l_2 + \frac{1}{\i} \nabla_\xi q_{-2}\cdot\nabla_x l_2) + \O (|\xi|^{-2}),
\end{align*}
In order to match the expansion $1 + 0 + \O(|\xi|^{-2})$, the expansion above must satisfy, for large $\xi$,
\begin{align*}
    q_{-2} l_2 = 1 \qandq q_{-2} l_1 + q_{-3} l_2 + \frac{1}{\i} \nabla_\xi q_{-2}\cdot\nabla_x l_2 = 0,
\end{align*}
that is, $q_{-2} = l_2^{-1} = (\xi\cdot\gamma_0\xi)^{-1}$ and
\begin{align*}
    q_{-3} = l_2^{-1} \left( -q_{-2} l_1 - \frac{1}{\i} \nabla_\xi q_{-2}\cdot\nabla_x l_2 \right) = l_2^{-3}\left( -l_2 l_1 - \i \nabla_\xi l_2\cdot\nabla_x l_2 \right).
\end{align*}
Now, we easily have $\nabla_\xi l_2 = 2\gamma_0\xi$ and $\nabla_x l_2 = \partial_{x_i} [\gamma_0]_{pq} \xi_p \xi_q \bfe_i$, where $\bfe_1,\dots,\bfe_n$ is the natural basis of $\Rm^n$. We thus deduce the expression of $q_3$
\begin{align*}
    q_{-3} = l_2^{-3} \i \left( [\gamma_0]_{pq}\xi_p\xi_q \partial_{x_i} [\gamma_0]_{ij}\xi_j - 2[\gamma_0]_{ij}\xi_j \partial_{x_i} [\gamma_0]_{pq}\xi_p\xi_q \right),
\end{align*}
from which \eqref{eq:qm3} holds. $q_{-3}$ is clearly in $S^{-3}$ from this expression, since $l_2^{-3}$ is of order $-6$. The proof is complete    
\end{proof}

We now give the expression of $M_{ij}|_{-1}$ (or rather, that of $A_0\ M_{ij}|_{-1}\ A_0$). 
\begin{proposition}[Expression of $A_0 M_{ij}|_{-1} A_0$] \label{prop:Mijm1}
    The symbol $A_0\ M_{ij}|_{-1}\ A_0$ admits the following expression for any $(i,j)$
    \begin{align}
	\begin{split}
	    A_0\ M_{ij}|_{-1}(x,\xi)\ A_0 &= \i \|\xi_0\|^{-1} \left( (\hxi_0\cdot V_j) (\Hm_i - 2\hxi_0\odot \Hm_i\hxi_0) + \hxi_0\odot \Hm_i V_j \right)\\
	    &\quad + \i\|\xi_0\|^{-1} \left( (\hxi_0\cdot V_i) (\Hm_j - 2\hxi_0\odot \Hm_j\hxi_0) + \hxi_0\odot \Hm_j V_i \right) \\
	    &\quad +\i \|\xi_0\|^{-1} \left( V_j\cdot G(x,\xi)(\hxi_0\odot V_i)+V_i\cdot G(x,\xi)(\hxi_0\odot V_j) \right),
	\end{split}
	\label{eq:Mijm1}
    \end{align}
    where we have defined $V_i := A_0\nabla u_i$, $\Hm_i := A_0\ \nabla^2 u_i\ A_0$, as well as the vector field
    \begin{align}
	G(x,\xi) := \|\xi_0\|^{2} (\i q_{-3} \xi_0 + A_0 \nabla_x q_{-2}) \in (S^0)^n. 
	\label{eq:G}
    \end{align}
\end{proposition}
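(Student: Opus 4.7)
The plan is to apply the three-factor pseudo-differential product formula \eqref{eq:prodpsidos2} to the two triple compositions $(\gamma_0\nabla u_i\cdot\nabla)\circ Q\circ P_j$ and $(\gamma_0\nabla u_j\cdot\nabla)\circ Q\circ P_i$ appearing in \eqref{eq:dHijparam}, using the parametrix $Q$ of $L_0$ modulo $\Psi^{-2}$ whose symbol $\sigma_Q=q_{-2}+q_{-3}$ modulo $S^{-4}$ is given in Lemma \ref{lem:qm2m3}. With this choice the error term $K_{ij}$ from \eqref{eq:Kp} lies in $\Psi^{-2}$ and cannot affect the symbol at order $-1$; the scalar summand $\gamma:\nabla u_i\odot\nabla u_j$ of \eqref{eq:dHijparam} is of order $0$ exactly and is likewise absent from $M_{ij}|_{-1}$. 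For the first triple product, set $\sigma_R=\i\xi_0\cdot V_i\in S^1$ with $\xi_0:=A_0\xi$ and $V_k:=A_0\nabla u_k$, and recall $\sigma_{P_j}=p_{j,1}+p_{j,0}$ with $p_{j,1}=\i\xi\odot\nabla u_j$, $p_{j,0}=\nabla^2 u_j$. The leading piece $\sigma_R\,q_{-2}\,p_{j,1}$ is of order $0$ and contributes only to $M_{ij}|_0$, which was handled in Section \ref{ssec:symbol0}.

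Collecting every order-$(-1)$ term from \eqref{eq:prodpsidos2} applied to the first triple product yields exactly five contributions: two from the principal product $\sigma_R\sigma_Q\sigma_P$ with one factor dropped by one order, namely $\sigma_R\,q_{-3}\,p_{j,1}$ and $\sigma_R\,q_{-2}\,p_{j,0}$; and three subprincipal corrections, $\frac{1}{\i}\sigma_R\,\nabla_\xi q_{-2}\cdot\nabla_x p_{j,1}$, $\frac{1}{\i}\,p_{j,1}\,\nabla_\xi\sigma_R\cdot\nabla_x q_{-2}$, and $\frac{1}{\i}\,q_{-2}\,\nabla_\xi\sigma_R\cdot\nabla_x p_{j,1}$. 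The second triple product supplies the $(i,j)\leftrightarrow(j,i)$ counterpart, and $M_{ij}|_{-1}$ is the sum of all ten.

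The remainder is bookkeeping. I would conjugate by $A_0$ on both sides, use the identity $A_0(U\odot W)A_0=(A_0U)\odot(A_0W)$ (valid because $A_0$ is symmetric), and set $\Hm_k=A_0\,\nabla^2 u_k\,A_0$, $\hxi_0=\xi_0/\|\xi_0\|$, $l_2=\xi\cdot\gamma_0\xi=\|\xi_0\|^2$. The three Hessian pieces in \eqref{eq:Mijm1} then fall out as follows: $\sigma_R\,q_{-2}\,p_{j,0}$ produces $\i\|\xi_0\|^{-1}(\hxi_0\cdot V_i)\Hm_j$; the correction $\frac{1}{\i}\sigma_R\,\nabla_\xi q_{-2}\cdot\nabla_x p_{j,1}$ yields $-2\i\|\xi_0\|^{-1}(\hxi_0\cdot V_i)\hxi_0\odot\Hm_j\hxi_0$ via $\nabla_\xi q_{-2}=-2l_2^{-2}\gamma_0\xi$ together with the chain rule $(\gamma_0\xi\cdot\nabla)\nabla u_j=\nabla^2 u_j\,\gamma_0\xi$; and $\frac{1}{\i}\,q_{-2}\,\nabla_\xi\sigma_R\cdot\nabla_x p_{j,1}$ gives $\i\|\xi_0\|^{-1}\hxi_0\odot\Hm_j V_i$ using $\nabla_\xi\sigma_R=\i\gamma_0\nabla u_i$ and $\nabla_x p_{j,1}=\i\xi\odot\nabla^2 u_j$. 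The two remaining contributions $\sigma_R\,q_{-3}\,p_{j,1}$ and $\frac{1}{\i}\,p_{j,1}\,\nabla_\xi\sigma_R\cdot\nabla_x q_{-2}$ are each a scalar multiple of $\xi\odot\nabla u_j$, hence of $\hxi_0\odot V_j$ after conjugation; their scalar coefficients sum to $\i\|\xi_0\|^{-1}V_i\cdot G(x,\xi)$ precisely because $G=\|\xi_0\|^2(\i q_{-3}\xi_0+A_0\nabla_x q_{-2})$ encodes exactly that linear combination, as one verifies by factoring $\|\xi_0\|$ out of $\xi_0=\|\xi_0\|\hxi_0$. The symmetric triple product contributes the $V_j\cdot G$ piece with $i\leftrightarrow j$.

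The main obstacle is combinatorial: ten order-$(-1)$ contributions must be grouped correctly to recognize the compact form \eqref{eq:Mijm1}, and the vector field $G$ in \eqref{eq:G} has to be read as a normalization precisely tuned to absorb both the $q_{-3}$ symbol and the $\nabla_x q_{-2}$ correction into a single background-dependent object (in particular $G\equiv 0$ when $\gamma_0$ is constant). Once the conjugation by $A_0$ is performed and the ten terms are sorted into the five groups above, \eqref{eq:Mijm1} follows by inspection.
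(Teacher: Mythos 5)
Your proposal is correct and follows essentially the same route as the paper: expand the two triple compositions in \eqref{eq:dHijparam} via the product formula \eqref{eq:prodpsidos2} with the parametrix of Lemma \ref{lem:qm2m3}, collect the five order-$(-1)$ contributions per composition, conjugate by $A_0$, and regroup into the stated form. The only cosmetic difference is that the paper merges your fourth and fifth contributions into the single term $\frac{1}{\i}\nabla_\xi(q_{-2}r_{i,1})\cdot\nabla_x p_{j,1}$ before splitting it back out; your term-by-term identifications all agree with the paper's \eqref{eq:tmp} and \eqref{eq:goal}.
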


\begin{proof}[Proof of Prop. \eqref{eq:Mijm1}] Assume $Q$ is a parametrix of $L_0$ modulo $\Psi^{-2}$ and consider formula \eqref{eq:dHijparam}. Since the term $\gamma:\nabla u_i\odot \nabla u_j$ is of order zero, the computation of $M_{ij}|_{-1}$ consists in computing the second term in the symbol expansion of $R_i \circ Q\circ P_j$, and the same term with $i,j$ permuted, where we denote $R_i := \gamma_0\nabla u_i\cdot\nabla$ with symbol $r_{i,1} = \i \gamma_0 \nabla u_i\cdot\xi$. Plugging $\sigma_{R_i} = r_{i,1}$, $\sigma_Q = q_{-2} + q_{-3}$ and $\sigma_{P_{i}} = p_{i,1} + p_{i,0}$ into \eqref{eq:prodpsidos2} and keeping only the terms that are homogeneous of degree $-1$ in $\xi$, we arrive at the expression
    \begin{align}
	\sigma_{R_iQP_j}|_{-1} = r_{i,1} (q_{-3}p_{j,1} + q_{-2} p_{j,0}) + \frac{1}{\i} ( p_{j,1} \nabla_\xi r_{i,1} \cdot \nabla_x q_{-2} +\nabla_\xi (q_{-2} r_{i,1}) \cdot\nabla_x p_{j,1}).
	\label{eq:RQP}
    \end{align}
    Note that the multiplications commute because the symbols of $Q$ and $R_i$ are scalar, while that of $P_j$ is matrix-valued. Since $M_{ij}|_{-1} = \sigma_{R_iQP_j}|_{-1} + \sigma_{R_jQP_i}|_{-1}$, equation \eqref{eq:Mijm1} will be proved when we show that 
    \begin{align}
	\begin{split}
	    A_0\ \sigma_{R_iQP_j}|_{-1}\ A_0  &= \i \|\xi_0\|^{-1} \Big( (\hxi_0\cdot V_i) (\Hm_j - 2\hxi_0\odot \Hm_j \hxi_0) \\
	    &\qquad + \hxi_0\odot \Hm_j V_i + V_i\cdot G(x,\xi)(\hxi_0\odot V_j) \Big).	    
	\end{split}	
	\label{eq:goal}
    \end{align}

    {\bf Proof of \eqref{eq:goal}.} Starting from \eqref{eq:RQP}, plugging the expression $r_{i,1} = \i (V_i\cdot\xi_0)$, using the identity
    \begin{align*}
	\nabla_\xi(q_{-2}r_{i,1})\cdot\nabla_x p_{j,1} = \i \xi\odot (\nabla^2 u_j \nabla_\xi(q_{-2}r_{i,1})),
    \end{align*}
    and pre- and post-multiplying by $A_0$ yields the relation
    \begin{align}
	\begin{split}
	    A_0\ \sigma_{R_iQP_j}|_{-1}\ A_0 &= \i (V_i\cdot\xi_0) (q_{-3} \i \xi_0 \odot V_j + q_{-2} \Hm_j) \\
	    &\qquad + (V_i\cdot A_0 \nabla_x q_{-2}) \i \xi_0 \odot V_j + \xi_0 \odot \Hm_j A_0^{-1} \nabla_\xi (q_{-2} r_{i,1}). 	    
	\end{split}
	\label{eq:tmp}	
    \end{align}
    Gathering the first and third terms recombines into $\i \|\xi_0\|^{-1} V_i\cdot G (\hxi_0\odot V_j)$ (the last term of \eqref{eq:goal}). On to the second and fourth terms, we first compute
    \begin{align*}
	A_0^{-1} \nabla_\xi (r_{i,1}q_{-2}) &= \i (V_i\cdot\xi_0) (-\|\xi_0\|^{-4}) 2\xi_0 + \|\xi_0\|^{-2} \i V_i = \i \|\xi_0\|^{-2} (V_i - 2(V_i\cdot\hxi_0) \hxi_0). 
    \end{align*}
    Using this calculation, the second and fourth terms in \eqref{eq:tmp} recombine into 
    \begin{align*}
	\i \|\xi_0\|^{-1} \left( (\hxi_0\cdot V_i) (\Hm_j - 2\hxi_0\odot \Hm_j\hxi_0) + \hxi_0\odot \Hm_jV_i \right) ,
    \end{align*}
    thus the argument is complete. 
\end{proof}

\subsection{Proof of Proposition \ref{prop:microloc}} \label{ssec:proofs}

{\bf Preliminaries:} By virtue of Hypothesis \ref{hyp:det}, $\nabla u_{n+1}$ may be decomposed into the basis $\nabla u_1,\dots,\nabla u_n$ by means of scalars $\mu_1,\dots, \mu_n, \mu$ such that 
\begin{align}
    \sum_{i=1}^n \frac{\mu_i}{\mu} \nabla u_i + \nabla u_{n+1} = 0.
    \label{eq:ld}
\end{align}
As seen in \cite{Monard2012b,Monard2012a}, the coefficients $\mu_1,\dots,\mu_{n+1}$ are directly computible from the power densities $\{\nabla u_i\cdot\gamma_0\nabla u_j\}_{1\le i\le j\le n+1}$ and on the other hand, we have the relation
\begin{align*}
    \frac{\mu_i}{\mu} = \frac{\det (\nabla u_1,\dots,\overbrace{\nabla u_{n+1}}^i, \dots, \nabla u_n)}{\det (\nabla u_1,\dots,\nabla u_m)}, \quad 1\le i\le n,
\end{align*}
thus $\nabla \frac{\mu_i}{\mu} = Z_i$ as defined in \eqref{eq:Zmat} for $1\le i\le n$. In the next proofs, we will use the following
\begin{lemma}\label{lem:Mmat}
    Under hypotheses \ref{hyp:det} and \ref{hyp:Z}, the following matrix-valued function 
    \begin{align}
	\Mm := \mu_i \Hm_i + \mu \Hm_{n+1}
	\label{eq:Mmat}
    \end{align}   
    is symmetric and uniformly invertible.   
\end{lemma}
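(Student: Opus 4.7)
My plan is to obtain symmetry by inspection and invertibility by differentiating the linear dependence relation \eqref{eq:ld} once in $x$, thereby expressing $\Mm$ as a product of matrices each of which is uniformly invertible under the standing hypotheses.

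Symmetry is immediate: each Hessian $\nabla^2 u_i$ is symmetric, hence so is $\Hm_i = A_0 \nabla^2 u_i A_0$, and $\Mm$ inherits symmetry as a sum of symmetric matrices.

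For invertibility, I will first observe that $\mu \ne 0$ pointwise on $\Omega$, since otherwise \eqref{eq:ld} would reduce to $\sum_i \mu_i \nabla u_i = 0$ and Hypothesis \ref{hyp:det} would force all $\mu_i = 0$. Because both $\Mm$ and its invertibility are equivariant under pointwise rescaling of $(\mu_1,\dots,\mu_n,\mu)$, I may work in the normalization $\mu \equiv 1$. Under this normalization, \eqref{eq:ld} reads $\nabla u_{n+1} = -\sum_i \mu_i \nabla u_i$, and Cramer's rule applied to the definition \eqref{eq:Zmat} gives $Z_i = -\nabla \mu_i$. Differentiating the identity once in $x$ with the convention $(a \otimes b)_{kj} = a_k b_j$ and substituting then yields
\begin{align*}
\nabla^2 u_{n+1} + \sum_{i=1}^n \mu_i \nabla^2 u_i = \sum_{i=1}^n \nabla u_i \otimes Z_i = [\nabla U]\, Z^T,
\end{align*}
where $[\nabla U] := [\nabla u_1 | \cdots | \nabla u_n]$ and $Z := [Z_1 | \cdots | Z_n]$. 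Pre- and post-multiplying by $A_0$ then gives $\Mm = A_0 [\nabla U] Z^T A_0$. Each factor is uniformly invertible on $\Omega$: $A_0$ is positive definite by uniform ellipticity of $\gamma_0$; $\det[\nabla U]$ is bounded below by $c_0 > 0$ by Hypothesis \ref{hyp:det}; and $Z$ is uniformly of full rank by Hypothesis \ref{hyp:Z}. Uniform invertibility of $\Mm$ on $\Omega$ follows.

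The main obstacle is notational rather than conceptual: one must track conventions for the outer product, keep the sign coming from Cramer's rule straight, and handle the scaling freedom in the coefficients $(\mu_i, \mu)$. A useful consistency check is that both sides of the displayed identity are manifestly symmetric, with the symmetry of $[\nabla U]\, Z^T$ encoding algebraically the symmetry of the Hessians on the left-hand side.
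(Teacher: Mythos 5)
Your proof is correct and takes essentially the same route as the paper's: symmetry is read off from the Hessians, and differentiating the linear dependence relation \eqref{eq:ld} exhibits $\Mm$ as a nonvanishing scalar multiple of $A_0\, Z [\nabla U]^T A_0$, each factor being uniformly invertible under Hypotheses \ref{hyp:det} and \ref{hyp:Z} (the paper writes this as $\Mm=-\mu A_0 Z\Vm^T$ with $\Vm=A_0[\nabla U]$). The only cosmetic differences are your normalization $\mu\equiv 1$ and a sign/transposition that the symmetry of $\Mm$ reconciles, as you note.
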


\begin{proof}
    Symmetry of $\Mm$ is obvious by definition. Taking gradient of \eqref{eq:ld}, we arrive at
    \begin{align*}
	\sum_{i=1}^n Z_i \otimes \nabla u_i + \frac{\mu_i}{\mu} \nabla^2 u_i + \nabla^2 u_{n+1} = 0.	
    \end{align*}
    Pre- and post-multiplying by $A_0$, we deduce that 
    \begin{align*}
	\Mm = \mu_i \Hm_i + \mu \Hm_{n+1} = -\mu A_0 Z_i \otimes V_i = -\mu A_0 Z \Vm^T,
    \end{align*}
    where $\Vm:= [V_1|\dots|V_n]$. The proof is complete since Hyp. \ref{hyp:det} ensures that $\mu$ never vanishes and $\Vm$ is uniformly invertible, and Hyp. \ref{hyp:Z} ensures that $Z$ is uniformly invertible. 
\end{proof}

{\bf The $T_{pq}$ operators. Proof of Prop. \ref{prop:microloc}:}
As advertised in Sec. \ref{ssec:statmicroloc}, because of the algebraic form of the symbols of the linearized power density operators, it is convenient for inversion purposes to define the microlocal change of basis $T\gamma = \{T_{pq}\gamma\}_{1\le p\le q\le n}$ as in \eqref{eq:T}, i.e. 
\begin{align*}
  T_{pq}\gamma (x) := (2\pi)^{-n} \int_{\Rm^n} e^{\i\xi\cdot x} A_0^{-1}\hxi_p \odot \hxi_q A_0^{-1} : \hat\gamma(\xi)\ d\xi.
\end{align*}
To convince ourselves that this collection forms a microlocally invertible operator of $\gamma$, let us introduce the zero-th order $\Psi$DOs $P_{ijpq}$ with scalar principal symbol $\sigma_{P_{ijpq}} := (\bfe_i\cdot A_0 \hxi_p) (\bfe_j\cdot A_0 \hxi_q)$ for $1\le i,j,p,q\le n$. Then for any $1\le i\le j\le n$, the composition of operators $\sum_{p,q=1}^n P_{ijpq} \circ T_{pq}$ has principal symbol (repeated indices are summed over)
\begin{align*}
    (\bfe_i\cdot A_0 \hxi_p) (\bfe_j\cdot A_0 \hxi_q) A_0^{-1}\hxi_p \odot \hxi_q A_0^{-1} &= (\bfe_i\cdot A_0 \hxi_p) A_0^{-1}\hxi_p \odot (\bfe_j\cdot A_0 \hxi_q) A_0^{-1}\hxi_q \\
    &= \bfe_i\odot\bfe_j,
\end{align*}
where we have used the following property, true for any smooth vector field $V$:
\begin{align*}
    V = (V\cdot A_0 \hxi_p) A_0^{-1}\hxi_p.
\end{align*}
Thus for any $1\le i\le j\le n$, the composition $\sum_{p,q=1}^n P_{ijpq} \circ T_{pq}$ recovers $\gamma_{ij} = \gamma: \bfe_i\otimes\bfe_j$ up to a regularization term. This in particular justifies the estimates \eqref{eq:changebasis} and the subsequent inversion procedure. We are now ready to prove Proposition \ref{prop:microloc}.

\begin{proof}[Proof of Proposition \ref{prop:microloc}]
    From the fact that $(V_1,\dots,V_n)$ is a basis at every point and given their dotproducts $H_{ij} = V_i\cdot V_j$, we have the following formula, true for every vector field $W$:
    \begin{align}
	W = H^{pq} (W\cdot V_p) V_q.
	\label{eq:vci}
    \end{align}
    {\bf Proof of {\em (i)}: Reconstruction of the components $T_{00}\gamma$ and $\{T_{\alpha\beta}\gamma\}_{1\le\alpha\le\beta\le n-1}$.} We work with $\wtM_{ij}|_0 := A_0\ M_{ij}|_0\ A_0 = V_i\odot V_j - (\hxi_0\cdot V_i) \hxi_0 \odot V_j - (\hxi_0\cdot V_j) \hxi_0 \odot V_i$. Using \eqref{eq:vci} with $W\equiv\hxi_\alpha$, straightforward computations yield
    \begin{align*}
	\sum_{i,j,p,q} H^{qj}(\hxi_\alpha\cdot V_q) H^{pi}(\hxi_\beta\cdot V_p) \wtM_{ij}|_0 &= \hxi_\alpha\odot \hxi_\beta - (\hxi_0\cdot\hxi_\alpha) \hxi_0\odot\hxi_\beta - (\hxi_0\cdot\hxi_\beta) \hxi_0\odot\hxi_\alpha \\
	&= \left\{	\begin{array}{ccc}
	    -\hxi_0\odot\hxi_0 & \text{ if }  & \alpha= \beta = 0, \\
	    0 &\text{ if }  & 0 = \alpha \ne \beta, \\
	    \hxi_\alpha\odot \hxi_\beta &\text{ if }  & \alpha\ne 0, \beta\ne 0.
	\end{array}
	\right.
    \end{align*}
    which means that upon defining $Q_{\alpha\beta ij}\in \Psi^0$ with scalar principal symbols 
    \begin{align*}
	\sigma_{Q_{00 ij}} &:= -\sum_{p,q} H^{qj}(\hxi_0\cdot V_q) H^{pi}(\hxi_0\cdot V_p), \\
	\sigma_{Q_{\alpha\beta ij}} &:= \sum_{p,q} H^{qj}(\hxi_\alpha\cdot V_q) H^{pi}(\hxi_\beta\cdot V_p), \quad 1\le \alpha\le \beta\le n-1,
    \end{align*}
    relation \eqref{eq:param1} is satisfied in the sense of operators since the previous calculation amounts to computing the principal symbol of the composition of operators in \eqref{eq:param1}. \\ \smallskip
    {\bf Proof of {\em (ii)}: Reconstruction of the components $\left\{ T_{0\alpha}\gamma \right\}_{1\le\alpha\le n-1}$.} It remains to construct appropriate operators that will map $\dH (\gamma)$ to the components $T_{0\alpha}\gamma$ for $1\le \alpha\le n-1$, which is where the additional measurements $\dH_{i,n+1}$ come into play. Let $(\mu_1,\dots,\mu_n,\mu)$ as in \eqref{eq:ld} and construct the $\Psi$DO $\{L_i(\gamma)\}_{i=1}^n$ as in \eqref{eq:Li}. It is easy to see that, since the $\mu_i$ are only functions of $x$, the terms of fixed homogeneity in the symbol expansion of $L_i$ satisfy
    \begin{align*}
	\sigma_{L_i}|_{k} = \mu_j M_{ij}|_{k} + \mu M_{i,n+1}|_{k}, \qquad k = 0,-1,-2,\dots.
    \end{align*}
    Then from equation \eqref{eq:Mij0_2} and relation \eqref{eq:ld}, we deduce that $\sigma_{L_i}|_0 = 0$, so that $L_i\in \Psi^{-1}$. Moreover, using equation \eqref{eq:Mijm1} together with relation \eqref{eq:ld}, we deduce that
    \begin{align*}
	\tilde\sigma_{L_i}|_{-1} = A_0\ \sigma_{L_i}|_{-1}\ A_0 = \i \|\xi_0\|^{-1} \left( (\hxi_0\cdot V_i) (\Mm - 2\hxi_0\odot \Mm\hxi_0) + \hxi_0 \odot \Mm V_i  \right)
    \end{align*}
    is now the principal symbol of $L_i$. Using relation \eqref{eq:vci} with $W\equiv \Mm^{-1} \hxi_\alpha$, the symmetry of $\Mm$ and multiplying by $\Mm$, we have the relation 
    \begin{align*}
	\hxi_\alpha = H^{pq} (\hxi_\alpha\cdot\Mm^{-1}V_p) \Mm V_q.
    \end{align*}
    Using this relation, we deduce the following calculation, for $1\le \alpha\le n-1$
    \begin{align}
	H^{pi} (\hxi_\alpha\cdot \Mm^{-1} V_p)\ \tilde\sigma_{L_i}|_{-1} = \i \|\xi_0\|^{-1} \left( (\hxi_0\cdot \Mm^{-1}\hxi_\alpha) (\Mm - 2\hxi_0\odot \Mm\hxi_0) + \hxi_0 \odot \hxi_\alpha \right).
	\label{eq:manip}
    \end{align} 
    While the second term gives us the missing components $T_{0\alpha}\gamma$, we claim that the first one is spanned by $\hxi_0\odot\hxi_0$ and $\{\hxi_\alpha\odot\hxi_\beta\}_{1\le\alpha\le\beta\le n-1}$. Indeed we have
    \begin{align*}
	(\Mm - 2\hxi_0\odot \Mm\hxi_0):(\hxi_0\odot\hxi_\alpha) &=  0, \quad 1\le \alpha\le n-1, \\
	(\Mm - 2\hxi_0\odot \Mm\hxi_0):(\hxi_0\odot\hxi_0) &= - \hxi_0\cdot\Mm\hxi_0, \\
	(\Mm - 2\hxi_0\odot \Mm\hxi_0):(\hxi_\alpha\odot\hxi_\beta) &= \hxi_\alpha\cdot\Mm\hxi_\beta, \quad 1\le\alpha\le\beta\le n-1,
    \end{align*}
    so we deduce that 
    \begin{align}
	\Mm - 2\hxi_0\odot \Mm\hxi_0 = - (\hxi_0\cdot\Mm\hxi_0)\ \hxi_0\odot\hxi_0 + \sum_{1\le\alpha,\beta\le n-1} (\hxi_\alpha\cdot\Mm\hxi_\beta)\ \hxi_\alpha\odot\hxi_\beta.
	\label{eq:Rexpr}
    \end{align}

    In light of these algebraic calculations, we now build the parametrices. Let $L_0^{\frac{1}{2}}\in \Psi^{1}$, $B_{\alpha i}\in \Psi^0$, $R\in (\Psi^0)^{n\times n}$, $R_\alpha\in \Psi^0$ and $R_{\alpha\beta} \in \Psi^0$ the $\Psi$DOs with respective principal symbols 
    \begin{align*}
	\sigma_{L_0^{\frac{1}{2}}} &= -\i \|\xi_0\|, \quad \sigma_{B_{\alpha i}} = H^{pi} (\hxi_\alpha\cdot \Mm^{-1} V_p), \quad \sigma_R = \Mm - 2\hxi_0\odot\Mm\hxi_0, \\
	\sigma_{R_\alpha} &= \hxi_0\cdot\Mm^{-1} \hxi_\alpha, \quad \sigma_{R_{\alpha\beta}} = \hxi_\alpha\cdot\Mm\hxi_\beta.
    \end{align*}
    Then the relation \eqref{eq:manip} implies \eqref{eq:param2} at the principal symbol level. The operator $R$ can indeed be expressed as the following zero-th order linear combination of the components $T_{00}$ and $\{T_{\alpha\beta}\}_{1\le \alpha,\beta\le n-1}$: 
    \begin{align*}
	R &= - R_{00} T_{00} + \sum_{1\le \alpha, \beta\le n-1} R_{\alpha\beta} T_{\alpha\beta} \\
	&= \sum_{i,j=1}^n \left( -R_{00} Q_{00ij} + \sum_{1\le\alpha,\beta\le n-1} R_{\alpha\beta} Q_{\alpha\beta ij} \right)\circ dH_{ij} \mod \Psi^{-1}, 
    \end{align*}
    so that the left-hand side of \eqref{eq:param2} is expressed as a post-processing of measurement operators $\dH_{ij}$ only. The proof is complete.  
\end{proof}

\section{Explicit inversion} \label{sec:explicit}

\subsection{Preliminaries and notation} \label{ssec:prelim2}

For a matrix $A$ with columns $A_1,\dots,A_n$ and $(\bfe_1,\dots,\bfe_n)$ the canonical basis, one has the following representation
\begin{align*}
    A = \sum_{j=1}^n A_j\otimes \bfe_j \qandq A^T = \sum_{j=1}^n \bfe_j\otimes A_j.
\end{align*}
More generally, for two matrices $A = [A_1|\dots|A_n]$ and $B = [B_1|\dots|B_n]$, we have the relation
\begin{align*}
    \sum_{j=1}^n A_j\otimes B_j = AB^T.
\end{align*}
Finally, for $A$ a matrix and $V = [V_1|\dots|V_n]$, the sum $A_{ij}V_j$ is nothing but the $i$-th column of the matrix $VA^T$.

\subsection{Derivation of \eqref{eq:gammaelim} from Hypothesis \ref{hyp:det}:} \label{ssec:der1}

Let us start from $n$ solutions $(u_1,\dots,u_n)$ fulfilling Hypothesis \ref{hyp:det}, and let $(v_1,\dots,v_n)$ the corresponding solutions of \eqref{eq:conductivity1}. We also denote $[\nabla U] := [\nabla u_1|\dots|\nabla u_n]$ and $[\nabla V]$ similarly. We first mention that for any vector field $V$, we have the following formulas
\begin{align}
    V = H^{pq} (V\cdot\gamma_0\nabla u_p) \nabla u_q = H^{pq} (V\cdot\nabla u_p) \gamma_0\nabla u_q,
    \label{eq:idV}
\end{align}
which also amounts to the following matrix relations
\begin{align}
    H^{pq} (\nabla u_p\otimes \nabla u_q) \gamma_0 = H^{pq} \gamma_0 (\nabla u_p\otimes\nabla u_q) = \Imm_n.
    \label{eq:idM}
\end{align}

From the relation
\begin{align*}
    \dH_{ij} = (\gamma\nabla u_i + \gamma_0\nabla v_i)\cdot\nabla u_j + \gamma_0 \nabla v_j\cdot\nabla u_i, \quad 1\le i,j\le n,
\end{align*}
we deduce, using \eqref{eq:idV},
\begin{align}
    \gamma\nabla u_i + \gamma_0\nabla v_i = H^{pq} \left( \dH_{ip} - \gamma_0\nabla v_p\cdot\nabla u_i  \right) \gamma_0\nabla u_q, \quad 1\le i\le n.
    \label{eq:rel1}
\end{align}
The previous equation allows us express $\gamma$ in terms of the remaining unknowns $(v_1,\dots,v_n)$. Indeed, taking the tensor product of \eqref{eq:rel1} with $H^{ij} \gamma_0\nabla u_j$ and summing over $i$ yields
\begin{align*}
    \gamma + \gamma_0 \nabla v_i\otimes \nabla u_j \gamma_0 H^{ij} &= H^{pq} \left( \dH_{ip} - \gamma_0\nabla v_p\cdot\nabla u_i  \right) (\gamma_0\nabla u_q \otimes \nabla u_j \gamma_0 H^{ij}) \\
    &= \dH_{ip} \gamma_0 (H^{pq}\nabla u_q\otimes H^{ij}\nabla u_j )\gamma_0 - \gamma_0 \nabla u_q\otimes \nabla v_p \gamma_0 H^{pq},
\end{align*}
where we have used the identity \eqref{eq:idV} in the last right-hand side. We may rewrite this as
\begin{align}
    \gamma = \gamma_0 \left( \dH_{ip} (H^{pq}\nabla u_q\otimes H^{ij}\nabla u_j) - 2 H^{ij} \nabla v_i\odot \nabla u_j \right) \gamma_0.
    \label{eq:reconsgamma}
\end{align}
One may notice that the above expression is indeed a symmetric matrix. In matrix notation, using the preliminaries, we arrive at the expression \eqref{eq:gammaelim}.

\subsection{Algebraic equations obtained by considering additional solutions:} \label{ssec:der2}

Let us now add another solution $u_{n+1}$ with corresponding solution $v_{n+1}$ at order $\O(\varepsilon)$. By virtue of Hypothesis \ref{hyp:det}, as in section \ref{ssec:proofs}, $\nabla u_{n+1}$ may be expressed in the basis $(\nabla u_1,\dots,\nabla u_n)$ as
\begin{align}
    \sum_{i=1}^n \frac{\mu_i}{\mu} \nabla u_i + \nabla u_{n+1} = 0,
    \label{eq:lindep}
\end{align}
where the coefficients $\mu_i$ can be expressed as ratios of determinants, or equivalently, computable from the power densities at order $\varepsilon^0$, see \cite[Appendix A.3]{Monard2012b}. For $1\le i\le n$, we define $Z_i := \nabla (\mu^{-1} \mu_i)$, and notice that we have the following two algebraic relations
\begin{align}
    \sum_{i=1}^n Z_i \cdot\gamma_0\nabla u_i = 0 \qandq \sum_{i=1}^n Z_i^\flat \wedge du_i = 0.
    \label{eq:algrelu}
\end{align}
The first one is obtained obtained after applying the operator $\nabla\cdot(\gamma_0\cdot)$ to \eqref{eq:lindep} and the second one is obtained after applying an exterior derivative to \eqref{eq:lindep} . 

Moving on to the study of the corresponding $v_{n+1}$ solution, we write 
\begin{align*}
    \dH_{n+1,j} + \frac{\mu_i}{\mu} \dH_{ij} = \left( \nabla v_{n+1} + \frac{\mu_i}{\mu}\nabla v_i \right) \cdot\gamma_0\nabla u_j, \quad 1\le j\le n,
\end{align*}
where we have cancelled sums of the form \eqref{eq:lindep}. Using the identity \eqref{eq:idV}, we deduce that
\begin{align}
    \nabla v_{n+1} + (\mu^{-1}\mu_i) \nabla v_i = H^{pq} \left(\dH_{n+1,p} + (\mu^{-1}\mu_i) \dH_{ip}\right) \nabla u_q.
    \label{eq:vvi}
\end{align}
Taking exterior derivative of the previous relation yields
\begin{align}
    Z_i^\flat\wedge dv_i = d\left( H^{pq} (\dH_{n+1,p} + (\mu^{-1}\mu_i) \dH_{ip}) \right) \wedge du_q.
    \label{eq:algrelv}
\end{align}
We now apply $\nabla\cdot(\gamma_0\cdot)$ to \eqref{eq:vvi}, the left-hand side becomes
\begin{align*}
    \nabla\cdot(\gamma_0 (\nabla v_{n+1} &+ (\mu^{-1}\mu_i)\nabla v_i) ) \dots \\
    &= \nabla\cdot(\gamma_0\nabla v_{n+1}) + Z_i\cdot\gamma_0\nabla v_i + (\mu^{-1}\mu_i)\nabla\cdot(\gamma_0\nabla v_i) \\
    &= - \nabla\cdot(\gamma \nabla u_{n+1}) + Z_i\cdot\gamma_0\nabla v_i - (\mu^{-1}\mu_i) \nabla\cdot(\gamma\nabla u_i) \\
    &= Z_i\cdot\gamma_0\nabla v_i - \nabla\cdot(\gamma (\nabla u_{n+1} + (\mu^{-1} \mu_i)\nabla u_i)) + Z_i \cdot\gamma\nabla u_i \\
    &= Z_i\cdot(\gamma_0\nabla v_i+\gamma\nabla u_i),
\end{align*}
thus we arrive at the equation
\begin{align*}
    Z_i\cdot(\gamma_0\nabla v_i+\gamma\nabla u_i) = \nabla \left( H^{pq} (\dH_{n+1,p} + (\mu^{-1}\mu_i) \dH_{ip}) \right) \cdot\gamma_0 \nabla u_q =: Y_q\cdot\gamma_0\nabla u_q,
\end{align*}
where the vector fields
\begin{align}
    Y_q := \nabla \left( H^{pq} (\dH_{n+1,p} + (\mu^{-1}\mu_i) \dH_{ip}) \right),\quad 1\le q\le n,
    \label{eq:Yq}
\end{align}
are known from the data $\dH$. Combining the latter equation with \eqref{eq:rel1}, we obtain
\begin{align*}
    (Z_i\cdot \gamma_0\nabla u_q) H^{pq} \left( \dH_{ip} - \gamma_0\nabla v_p\cdot\nabla u_i  \right) = Y_q \cdot\gamma_0 \nabla u_q,
\end{align*}
which we recast as
\begin{align*}
    (Z_i\cdot \gamma_0\nabla u_q) H^{pq}  (\gamma_0\nabla v_p\cdot\nabla u_i ) = (Z_i\cdot \gamma_0\nabla u_q) H^{pq} \dH_{ip} - Y_q \cdot\gamma_0 \nabla u_q.
\end{align*}
The left-hand side can be considerably simplified by noticing that the second equation of \eqref{eq:algrelu} implies $[\nabla U]Z^T = Z[\nabla U]^T$. With this fact in mind, the left-hand side looks like $X_p\cdot\nabla v_p$, where we compute
\begin{align*}
    X_p = H^{pq} \gamma_0 \nabla u_i \otimes Z_i \gamma_0 \nabla u_q &= \gamma_0 [\nabla U]Z^T \gamma_0 [\nabla U] H^{-1} \bfe_p \\
    &= \gamma_0 Z [\nabla U]^T [\nabla U]^{-T} \bfe_p = \gamma_0 Z_p.
\end{align*}

Finally, we obtain the more compact equation
\begin{align}
    \sum_{p=1}^n \gamma_0 Z_p\cdot\nabla v_p = f, \where\quad f := (H^{pq} \dH_{ip}\ Z_i - Y_q)\cdot \gamma_0\nabla u_q,
    \label{eq:algrelv2}
\end{align}
with $Y_q$ given in \eqref{eq:Yq}.

\begin{remark}[On algebraic inversion]
    In equations \eqref{eq:algrelv} and \eqref{eq:algrelv2}, the only unknown is the matrix $[\nabla V]:= [\nabla v_1,\dots,\nabla v_n]$. Equations \eqref{eq:algrelv} and \eqref{eq:algrelv2} give us the projection of that matrix onto the space $Z A_n(\Rm)$ and onto the line $\Rm \gamma_0 Z$ respectively. As in the non-linear case \cite{Monard2012b,Monard2012a}, we expect that a rich enough set of such equations provided by a certain number of additional solutions $(u_{n+1}, \dots, u_{n+l})$ leads to a pointwise, algebraic reconstruction of $[\nabla V]$, however we do not follow that route here.
\end{remark}

\subsection{Proof of Proposition \ref{prop:scesv} and Theorem \ref{thm:explicit}} \label{ssec:proofs2}
We now show that provided that we use {\em one} additional solution $u_{n+1}$ (on top of the basis $(u_1,\dots,u_n)$) such that the matrix $Z$ is of full rank, then we can reconstruct $(v_1,\dots,v_n)$ via a strongly coupled elliptic system of the form \eqref{eq:scesv}, after which we can reconstruct $\gamma$ from $(\nabla v_1,\dots,\nabla v_n)$ by formula \eqref{eq:gammaelim}. We now show how to derive this elliptic system.

\begin{proof}[Proof of Proposition \ref{prop:scesv}]
    According to Hypothesis \ref{hyp:Z}, the matrix $Z=[Z_1|\dots|Z_n]$ has full rank and we recall the important equations
    \begin{align}
	\sum_{p=1}^n \gamma_0 Z_p\cdot\nabla v_p &= f\qandq \sum_{i=1}^n Z_i^\flat \wedge dv_i = \omega, \where \label{eq:eqsv} \\
	\omega = Y_q^\flat \wedge du_q, &\qquad Y_q:= \nabla (H^{pq}(\dH_{n+1,p} + (\mu^{-1}\mu_i) \dH_{ip})), \label{eq:omY}
    \end{align}
    and where $f$ is given in \eqref{eq:algrelv2}. Assuming that $Z$ has full rank, the family $(Z_1,\dots,Z_n)$ is a frame with dotproducts defined as $\Xi_{ij} = Z_i\cdot Z_j$, and in this case we define its dual frame $Z_i^\star := \Xi^{ij}Z_j$ for $1\le i\le n$, such that $Z_i^\star\cdot Z_j = \delta_{ij}$, i.e. with $Z^\star$ the matrix with columns $Z^\star_j$, we have the relation $Z^{\star} = Z^{-T}$. The second equation of \eqref{eq:eqsv} may be rewritten as
    \begin{align}
	Z^\star_q\cdot\nabla v_p - Z^\star_p\cdot\nabla v_q = \omega(Z_p^\star,Z_q^\star), \quad 1\le p,q\le n.
	\label{eq:Zstar}
    \end{align}
    Applying the differential operator $Z_i^\star\cdot\nabla$ to the first equation of \eqref{eq:eqsv}, we obtain
    \begin{align}
	\sum_{p=1}^n (Z_i^\star\cdot\nabla) (\gamma_0 Z_p\cdot\nabla) v_p = (Z_i^\star\cdot\nabla) f.
	\label{eq:tmpSCES}
    \end{align}
    Using \eqref{eq:Zstar}, we may rewrite the left-hand side of \eqref{eq:tmpSCES} as
    \begin{align*}
	(Z_i^\star\cdot\nabla) (\gamma_0 Z_p\cdot\nabla) v_p &= [Z_i^\star, \gamma_0 Z_p]\cdot\nabla v_p + (\gamma_0 Z_p\cdot\nabla) (Z_i^\star\cdot\nabla)  v_p \\
	&= [Z_i^\star, \gamma_0 Z_p]\cdot\nabla v_p + (\gamma_0 Z_p\cdot\nabla) (Z_p^\star\cdot\nabla) v_i \dots\\
	&\qquad+ (\gamma_0 Z_p\cdot\nabla) ( \omega (Z_p^\star, Z_i^\star) ),
    \end{align*}
    where we have introduced the Lie bracket of two vector fields, which may be written in the Euclidean connection
    \begin{align}
	[X,Y] := (X\cdot\nabla) Y - (Y\cdot\nabla) X.
	\label{eq:lie}
    \end{align}
    Plugging the last calculation into \eqref{eq:tmpSCES} (repeated indices are summed over)
    \begin{align}
	(\gamma_0 Z_p\cdot\nabla) (Z_p^\star\cdot\nabla) v_i + [Z_i^\star, \gamma_0 Z_p]\cdot\nabla v_p = (Z_i^\star\cdot\nabla) f - (\gamma_0 Z_p\cdot\nabla)( \omega (Z_p^\star, Z_i^\star) ).
	\label{eq:tmpSCES2}
    \end{align}
    We now look more closely at the principal part of this equation. The first term may be written as 
    \begin{align*}
	\gamma_0 Z_p\otimes Z_p^\star :\nabla^2 v_i + ((\gamma_0 Z_p\cdot\nabla)Z_p^\star)\cdot\nabla v_i = \gamma_0:\nabla^2 v_i + ((\gamma_0 Z_p\cdot\nabla)Z_p^\star)\cdot\nabla v_i,
    \end{align*}
    where we have used that $Z_p\otimes Z_p^\star = \Imm_n$. We thus obtain a strongly coupled elliptic system of the form \eqref{eq:scesv}, where
    \begin{align}
	W_{ij} &:=  (\nabla\cdot\gamma_0 - ((\gamma_0 Z_p\cdot\nabla) Z_p^\star))\ \delta_{ij} - [Z_i^\star, \gamma_0 Z_j], \quad 1\le i,j\le n, \label{eq:Wij} \\
	f_i    &:= - Z_i^\star\cdot\nabla f + (\gamma_0 Z_p\cdot\nabla) ( \omega (Z_p^\star,Z_i^\star) ),\quad 1\le i\le n. \label{eq:fi}	
    \end{align}
    This concludes the proof.
\end{proof}

In order to assess the properties of system \eqref{eq:scesv}, we recast it as an integral equation as follows: Let us call $L_0 := -\nabla\cdot (\gamma_0 \nabla )$, and define $L_0^{-1}: H^{-1}(X) \ni f\mapsto u\in H_0^1(X)$, where $u$ is the unique solution to the equation
\begin{align}
    -\nabla\cdot(\gamma_0\nabla u) = f \quad (X), \quad u|_{\partial X} = 0.
    \label{eq:Lzero}
\end{align}
By the Lax-Milgram theorem (see e.g. \cite{evans}), one can establish that such solutions satisfy an estimate of the form $\|u\|_{H^1_0(X)}\le C \|f\|_{H^{-1}(X)}$, where $C$ only depends on $X$ and the constant of ellipticity of $\gamma_0$, thus $L_0^{-1}:H^{-1} (X)\to H_0^1(X)$ is continuous, and by Rellich imbedding (i.e. the fact that the injection $L^2\to H^{-1}$ is compact), $L_0^{-1}: L^2(X) \to H_0^1(X)$ is {compact}.

Applying the operator $L_0^{-1}$ to \eqref{eq:scesv}, we arrive at the integral system
\begin{align}
    v_i + \sum_{j=1}^n L_0^{-1} (W_{ij}\cdot\nabla v_j) = h_i := L_0^{-1} f_i\quad (X), \quad 1\le i\le n,
    \label{eq:SCES3}
\end{align}
where it is easy to establish that for $1\le i,j\le n$, the operator
\begin{align}
    P_{ij}: H_0^1(X) \ni v \to P_{ij} v := L_0^{-1} (W_{ij}\cdot\nabla v) \in H_0^1(X)
    \label{eq:Pij}
\end{align}
is compact whenever the vector fields $W_{ij}$ are bounded. In vector notation, if we define the vector space $\H = (H_0^1 (X))^n$, $\bfv = (v_1,\dots,v_n)$, $\bfh = (h_1,\dots,h_n)$ and for $\bfv\in\H$,
\begin{align}
    \bfP\bfv := (P_{1j}v_j, P_{2j}v_j,  \dots, P_{nj}v_j) \in \H,
    \label{eq:P}
\end{align}
we have that $\bfP:\H\to\H$ is a compact linear operator, and the system \eqref{eq:scesv} is reduced to the following Fredholm (integral) equation
\begin{align}
    (\bfI + \bfP)\bfv = \bfh.
    \label{eq:scesint}
\end{align}

Note here that the operator $\bfP$ defined in \eqref{eq:P} depends only on $\gamma_0$ and the solutions $u_i$, so that the injectivity properties depend on the $\gamma_0$ around which we pose the problem, in particular, whether one can fulfill hypotheses \ref{hyp:det} and \ref{hyp:Z}. 

\paragraph{Injectivity and stability.}

Equation \eqref{eq:scesint} satisfies a Fredholm alternative. In particular, if $-1$ is not an eigenvalue of $\bfP$, \eqref{eq:scesint} admits a unique solution $\bfv\in\H$ (injectivity), $(\bfI+\bfP)^{-1}:\H\to\H$ is well-defined and continuous and $\bfv$ satisfies the estimate
\begin{align}
    \|\bfv\|_{\H} \le \|(\bfI+\bfP)^{-1}\|_{\L(\H)} \|\bfh\|_{\H},
    \label{eq:stabvh}
\end{align}
from which we deduce stability below. In the statement of Theorem \ref{thm:explicit}, the fact that ``system \eqref{eq:scesv} with trivial right-hand sides admits no non-trivial solution'' precisely means that $-1$ is not an eigenvalue of the operator $\bfP$.

\begin{remark}[Injectivity when $\gamma_0$ is constant]
    When $\gamma_0$ is constant, constructing $(u_1,\dots,u_{n+1})$ as in Remark \ref{rem:const} yields $Z=Q$ a constant matrix. In particular, the commutators $[Z_i^\star, \gamma_0 Z_j]$ vanish in the expression \eqref{eq:Wij} of $W_{ij}$. Thus system \eqref{eq:scesv} is decoupled and clearly injective. By continuity, we also obtain that \eqref{eq:scesv} is injective for $\gamma_0$ (not necessarily scalar) sufficiently close to a constant.
\end{remark}

We now prove Theorem \ref{thm:explicit}.
\begin{proof}[Proof of Theorem \ref{thm:explicit}]
    Starting from the integral version \eqref{eq:scesint} of the elliptic system \eqref{eq:scesv} in the case where $-1\notin \sp(\bfP)$, then the Fredholm alternative implies \eqref{eq:stabvh}. In order to translate inequality \eqref{eq:stabvh} into a stability statement, we must bound $\bfh$ in terms of the measurements $\{\dH_{ij}\}$. We have for $1\le i\le n$,
    \begin{align*}
	\|h_i\|_{H_0^1(X)} \le \|L_0^{-1}\|_{\L(H^{-1}, H_0^1)} \|f_i\|_{H^{-1}(X)},
    \end{align*}
    and since $f_i$, expressed in \eqref{eq:fi} involves the $\dH_{ij}$ and their derivatives up to second order, if we assume all other multiplicative coefficients to be uniformly bounded, we obtain an estimate of the form
    \begin{align*}
	\|h_i\|_{H_0^1(X)} \le C \|\dH\|_{H^1(X)}, \where\quad \|\dH\|_{H^1(X)} := \!\!\!\! \sum_{1\le i\le n,\ i\le j\le n+1} \!\!\!\! \!\!\!\! \|\dH_{ij}\|_{H^1(X)},
    \end{align*}
    thus we obtain in the end, an estimate of the form
    \begin{align}
	\|\bfv\|_{H^1_0(X)} \le C \|\dH\|_{H^1(X)}.
	\label{eq:stabv}
    \end{align}
    Once $\bfv$ is reconstructed, we can reconstruct $\gamma$ uniquely from $\dH$ and $[\nabla V]$ using formula \eqref{eq:gammaelim}, with the stability estimate
    \begin{align}
	\|\gamma\|_{L^2(X)} \le C \|\dH\|_{H^1(X)}.
	\label{eq:stabgamma}
    \end{align}
    \smallskip
    {\bf Regaining one derivative back on $\tr (\gamma_0^{-1}\gamma)$:} In order to see that $\tr (\gamma_0^{-1}\gamma)$ satisfies a gradient equation that improves the stability of its reconstruction, the quickest way is to linearize \cite[Equation (7)]{Monard2012a} derived in the non-linear case, which reads as follows:
    \begin{align*}
	\nabla\log\det \gamma^\varepsilon = \nabla\log\det H^\varepsilon + 2 \left( (\nabla (H^\varepsilon)^{jl})\cdot \gamma^\varepsilon \nabla u^\varepsilon_l \right) \nabla u^\varepsilon_j,
    \end{align*}
    where $H^\varepsilon$ is the $n\times n$ matrix of power densities $H^\varepsilon_{ij} = \nabla u_i^\varepsilon\cdot\gamma^\varepsilon\nabla u_j^\varepsilon$ and $(H^\varepsilon)^{jl}$ is the $(j,l)$-th entry of $(H^\varepsilon)^{-1}$. Plugging the expansions $\gamma^\varepsilon = \gamma_0 + \varepsilon \gamma$, $u_i^\varepsilon = u_i + \varepsilon v_i$, $H^\varepsilon_{ij} = H_{ij} + \varepsilon\dH_{ij}$, and using the fact that 
    \begin{align*}
	(H^\varepsilon)^{jl} = H^{jl} - \varepsilon (H^{-1} \dH H^{-1} )^{jl} + \O (\varepsilon^2),
    \end{align*}
    the linearized equation at $\O(\varepsilon)$ reads
    \begin{align*}
	\frac{1}{2} \nabla \tr(\gamma_0^{-1}\gamma) &= \frac{1}{2} \nabla \tr (H^{-1}\dH) + (\nabla H^{jl} \cdot\gamma_0 \nabla u_l) \nabla v_j + (\nabla H^{jl} \cdot\gamma_0 \nabla v_l) \nabla u_j \\
	&\quad + (\nabla H^{jl}\cdot\gamma\nabla u_l) \nabla u_j - \left( \nabla (H^{-1}\dH H^{-1})^{jl} \cdot\gamma_0 \nabla u_l \right) \nabla u_j.
    \end{align*}
    From this equation, and using the stability estimates \eqref{eq:stabv} and \eqref{eq:stabgamma}, it is straighforward to establish the estimate
    \begin{align*}
	\| \tr(\gamma_0^{-1}\gamma) \|_{H^1(X)} \le C \|\dH \|_{H^1(X)},
    \end{align*}
    and thus the proof is complete. 
\end{proof}



\medskip
Received xxxx 20xx; revised xxxx 20xx.
\medskip

\end{document}